\title{Compactification of the space of branched coverings of the two-dimensional sphere}
\author{Zvonilov V.I, Orevkov S.Yu.\thanks{The second author was supported by the RSF grant, 
	project 14-21-00053 dated 11.08.14.}}
\begin{document}

\newcommand{\Cb}{\mathbb{C}}

\newtheorem{te}{Theorem}
\newtheorem{pr}{Proposition}
\newtheorem{lem}{Lemma}
\newtheorem{sle}{Corollary}
\newtheorem{za}{Remark}
\newtheorem{opr}{Definition}
\newtheorem{ex}{Example}

%Расстановка точек в заголовках:
\renewcommand{\captionlabeldelim}{.}

\maketitle
%UDC 515.179.25

\begin{abstract}
For a closed oriented surface  $ \Sigma $ we define its degenerations into singular surfaces that are locally homeomorphic to wedges of disks. Let  $X_{\Sigma,n}$ be the set of isomorphism classes of  orientation preserving $n$-fold branched coverings  $ \Sigma\rightarrow S^2 $ of the two-dimensional sphere. We complete $X_{\Sigma,n}$ with the  isomorphism classes of mappings that  cover the sphere by the degenerations of $ \Sigma $. In case $ \Sigma=S^2$,  the topology that we  define on the obtained completion  $\bar{X}_{\Sigma,n}$ coincides  on $X_{S^2,n}$ with the topology induced  by the space of coefficients of rational functions $ P/Q $, where $ P,Q $ are homogeneous polynomials of degree $ n $ on $ \Cb\mathrm{P}^1\cong S^2$.

 We prove that $\bar{X}_{\Sigma,n}$ coincides with the Diaz-Edidin-Natanzon-Turaev compactification of the Hurwitz space $H(\Sigma,n)\subset X_{\Sigma,n}$ consisting of isomorphism classes of  branched coverings with all critical values being simple. 
\end{abstract}

%%%%%%%%%%%%%%%%%%%%%%%%%%%%%%%%%%%%%%%%%%%%%%%%%%%%%%%%%%%%%%%%%%%%%%
 \section{Introduction}
Suppose  $ \Sigma $ is a closed oriented surface being fixed throughout the paper. Orientation preserving  $ n $-sheeted branched coverings $f_1, f_2:\Sigma\rightarrow S^2$ of $ 2 $-sphere are \emph{isomorphic} if there exist a homeomorphism $ \alpha :\Sigma\rightarrow \Sigma $ with $f_2\circ\alpha=f_1$.
Let $X_{\Sigma,n}$ be the set of isomorphism classes of such coverings, and  $ H(\Sigma,n)\subset X_{\Sigma,n}$ be the subset of isomorphism classes of coverings  with simple critical values. Due to Riemann-Hurwitz formula for a covering $ f\in X_{\Sigma,n} $, the sum of multiplicities of critical values is $2n-\chi(\Sigma)$, where $ \chi $ is the Euler characteristic.

Denote by $ P^k $ the symmetric power of $ S^2 $, i.e. the quotient space of  $ (S^2)^k $ by the symmetric group. For $ k=2n-\chi(\Sigma) $, let  $ LL:X_{\Sigma,n} \rightarrow P^k$ be the map that takes  the isomorphism class  $ [f] $ of a covering $ f $ to the set of critical values of the covering with their  multiplicities. 

As it is well known (see, e.g, \cite[5.3.5]{LZ}),  $ H(\Sigma,n)$ admits a natural topology such that the restriction  $ LL|_{H(\Sigma,n)}$ is an unbranched covering of its image.  $ H(\Sigma,n) $ endowed with this topology is called \emph{Hurwitz space}.

Compactifications of $H(\Sigma,n)$ were discussed
by different authors starting with
Harris and Mumford's seminal paper \cite{HM}. Natanzon and
Turaev \cite{NT} constructed a compactification $ N(\Sigma,n) $ of $ H(\Sigma,n)$ by topological methods.  Diaz \cite{D} proved that  $ N(\Sigma,n) $ coincides with the compactification of  $ H(\Sigma,n) $ built in \cite{DE} by means of algebraic geometry.

We think that our description of the compactification $ N(\Sigma,n) $ of the Hurwitz space is geometrically more illustrative. Moreover, the description makes it clear that $X_{\Sigma,n}$ is naturally embedded in  $ N(\Sigma,n) $. In particular, we compactify the space $\mathcal{X}_n$ of isomorphism classes of rational functions on $ \Cb\mathrm{P}^1\cong S^2$. 
 Our main interest is the subspace of  $\mathcal{X}_n$ composed of  $j$-invariants of trigonal curves on a ruled surface, i.e. of the maps from the base of the  ruled surface to the modular curve, see  \cite[n. 4]{Or}, and also \cite[2.1.2, 3.1.1]{Degt}. In further papers, we intend to apply the obtained results to compute the fundamental group of the space of nonsingular trigonal curves.

Hurwitz spaces are sometimes understood also as more general objects in which the base $ B $ of $ n $-sheeted branched coverings is not $ S^2 $ but a closed oriented surface of any genus. One may also require that
the local monodromy at all critical values should belong to fixed conjugacy classes in some permutation group on $n$ points. For $B=S^2$, these spaces can be identified with the strata of some natural stratification of the pair  $(N(\Sigma,n),X_{\Sigma,n})$. See \cite{BE}~ -- \cite{K}.

The paper is organized as follows. In Sections \ref{singsurf}, \ref{Deform} we introduce the notions of degenerate (singular) surface and  its covering map onto the $ 2 $-sphere. In Section \ref{Xn} we complete  $X_{\Sigma,n}$  with  isomorphism classes of mappings that  cover the sphere by  degenerations of $ \Sigma $. We  define a topology on the obtained completion $\bar{X}_{\Sigma,n}$  and we use an extension of  $ LL $ to  $\bar{X}_{\Sigma,n}$ to prove that the topology is Hausdorff. In Section \ref{rationf} we prove that  $X_{S^2,n}$ is homeomorphic to the space of isomorphism classes of rational functions on $\Cb\mathrm{P}^1$. In Section \ref{NTc} we prove that  $\bar{X}_{\Sigma,n}$ and $ N(\Sigma,n) $ are homeomorphic, the key reference being Proposition \ref{pert} asserting that if the boundary of a connected oriented surface covers a circle  then there exists a unique, up to isotopy,   extension of the covering to a branched covering of the disk (see \cite[Proof of Corollary 2.2]{Nat}). %%%%%%%%%%%%%%%%%%%%%%%%%%%%%%%%%%%%%%%%%%%%%%%%%%%%%%%%%%%%%%%%%%%%%%%%

\section{The space of isomorphism classes of branched coverings}

\subsection{Singular surfaces}
	\label{singsurf}
We shall use a topological counterpart to the notion of singular algebraic curve. 

 In this paper,  \emph{a surface}  is a Hausdorff topological space with a countable base such that any its point has a neighbourhood homeomorphic to an open disk  or to a wedge of open disks (or to the union of an open half-disk  with its diameter in case of a surface with boundary).  We call such a neighbourhood \emph{admissible} and say that the disks of the wedge are \emph{the branches of the surface} at the center of the wedge.
We assume that a non-negative integer 
 $ g_v$ (called  \emph{the local genus of the surface at the point} $v$) is assigned to any point $v$ of such a surface; the local
 genus is non-zero only at a finite number of points and it vanishes on the boundary.  That is,  speaking more formally, a surface is a pair of a topological space  and an integral-valued function  $v\mapsto g_v$ on it such that the space and the function have the above properties.  
Let $m_v$ be the number of branches of a surface at the point  $ v $ and  $ \mu_v=2g_v+m_v-1$ be  \emph{the Milnor number} of $ v $.
  Points with  
  $ \mu_v>0$ are \emph{the singular points} of the surface. A surface  is 
 \emph{degenerate} (or \emph{singular}) if is has
 singular points. Otherwise it is  \emph{nonsingular}. 
 
 The \emph{the normalization} of a surface is defined as a nonsingular
 surface obtained by replacing an admissible neighbourhood of each
 singular point with a disjoint union of smooth disks. There is a natural projection of the normalization to the surface that takes each glued disk  to the corresponding disk  of the wedge. 
  \emph{A component} of a surface is a connected component of its normalization. So    \emph{a component of a surface is a two-dimensional manifold.} A surface is \emph{oriented/closed} if its normalization is oriented/closed (compact and without  boundary).
  
Below all the surfaces are oriented and all the maps are
orientation preserving. 

Let $ \Sigma_0 $ be a surface (in the above sense, maybe singular), $ v $ be its point, and
$U_v$ be the closure of an admissible neighbourhood of $v $.
Take a compact connected orientable  (maybe singular) surface 
  $\tilde{U}_v$ with $m_v$ boundary components and such that
$b_1(\tilde{U}_v)+\sum_{x\in\tilde{U}_v}\mu_x=\mu_v$ and $b_2(\tilde{U}_v)=0$ (i.e. $\tilde{U}_v$ has no components without boundary);
here $b_i$ is $i$-th Betti number. Glue  the surfaces $ \Sigma_0\setminus\mathrm{Int}U_v $ and $\tilde{U}_v$ along their common boundary. We say that the resulting surface 
 $ \Sigma_1$ is \emph{a perturbation} of $ \Sigma_0$ \emph{at the point} $ v $, or \emph{a local perturbation}, and $ \Sigma_0$ is \emph{a local degeneration} of $ \Sigma_1$, \emph{by means of
} $U_v$, $\tilde{U}_v$.
A composition of a finite number of local perturbations/degenerations of a surface is called a \emph{perturbation/degeneration} of the surface.

\begin{za}
	\label{mu}
For the surface $\tilde{U}_v$ from the above definition,  $b_1(\tilde{U}_v)=1-\chi(\tilde{U}_v)$ and if $ \tilde{U}_v $ is nonsingular then $\mu_v=1-\chi(\tilde{U}_v)$.     
\end{za}
\begin{proof}
The first equality holds for any surface 
$\tilde{U}_v$ with the listed  properties. The second one is evident from the definition of perturbation.	
\end{proof}

 \subsection{Perturbations/degenerations of coverings}
\label{Deform}
A map $ f $  of a surface $ \Sigma' $ to the $ 2 $-sphere is called  \emph{a branched covering} if, for $ \pi $ being the projection of normalization, the restriction of the composition $f\circ\pi$  to any component of the surface is an orientation preserving branched covering and each singular point   $ v \in\Sigma' $ with $ m_v=1 $  
 is a ramification point of 
  $f$. Ramification points of $f$ and  singular points of
  $\Sigma'$ are called  \emph{critical points} of $f$ and their images are called
 \emph{critical values}.  By $ \mathrm{cr} f $ we denote the set of all critical values of $ f $.

Let $v$ be a critical point of a branched covering $f_0:\Sigma_0\rightarrow S^2$
and let $\bar{U}_w\subset S^2$ be a closed disk which contains
$w=f_0(v)$ as an inner point and which does not contain other critical
values of $f_0$. 
Pick a connected component  $U_v$  of $f^{-1}(\bar{U}_w)$ with $v\in U_v$. Clearly it is a closed admissible neighbourhood of  $v$.
Let $ \Sigma_1 $ be a perturbation of $ \Sigma_0 $  at $ v $ by means of
 $U_v$, $\tilde{U}_v$. We say that a branched covering  $f_1:\Sigma_1\rightarrow S^2$ is  \emph{a perturbation} of $f_0$ \emph{at} $ v $, or \emph{local perturbation with  perturbation domain} $ \bar{U}_w $ and that  $f_0$ is  \emph{a local degeneration} of $f_1$ if  $f_0=f_1$ on $\Sigma_0\setminus  \mathrm{Int}U_v=\Sigma_1\setminus\mathrm{Int}\tilde{U}_v$. It is obvious that $ f_1(\tilde{U}_v)=\bar{U}_w $.

 Now suppose that the disks $\bar{U}_w$ picked for all critical values $ w $ of  $ f_0$ are disjoint.  
A composition $ f$ of a finite number of local perturbations of $ f_0$ that have the perturbation domains $ \bar{U}_w $ is called   \emph{a perturbation} of the covering. The union $V=\bigcup_w\bar{U}_w$ is \emph{a perturbation domain} of $f_0$. In this case  $ f_0 $ is called \emph{a degeneration} of $ f $.

The following Remark is immediate from the definition of perturbation.

\begin{za}
	\label{trans} \emph{Transitivity of perturbation:}
	Let  $ f_1:\Sigma_1\rightarrow S^2$ be a perturbation of a branched covering  $ f_0:\Sigma_0\rightarrow S^2$ with the perturbation domain $ V_0 $ and   $ f_2:\Sigma_2\rightarrow S^2$ be a perturbation of a branched covering $ f_1$ with the perturbation domain $ V_1\subset V_0$. Then  $ f_2$ is a perturbation of $ f_0$ with the perturbation domain $V_0$.     
\end{za}
Below, we need degenerations only of the fixed   \emph{nonsingular} surface $ \Sigma$ 
 (see the introduction). 
 
Let  $ f':\Sigma'\rightarrow S^2$ be a degeneration of a branched covering $ f:\Sigma\rightarrow S^2$ corresponding to a perturbation domain $ V $. For any critical point  
$ v$ of $ f'$, let $ U_v\subset\Sigma',\tilde{U_v}\subset\Sigma$ be the corresponding surfaces involved in the definition
of perturbation.  
Denote by $k_1,k_2,\ldots,k_{m_v}$ the degrees of the restrictions of $ f' $ to the boundary circles of   $U_v$. It is clear that  $\sum_{i=1}^{m_v}k_i$ is equal to the local degree $ \mathrm{deg}_vf' $ of $ f' $ at the point $ v $ that is the multiplicity of the point as a root of the equation $w=f'(v)$. 
\emph{The index of a point} $ v\in \Sigma' $ is defined as $\mathrm{ind}_v=\mu_v + \mathrm{deg}_vf'$. 

\emph{The multiplicity of a critical value} $ w $ of  $ f' $ is $ \sum_{v\in f'^{-1}(w)}(\mathrm{ind}_v-1) $. A critical value is called \emph{simple} if it is of multiplicity $1$. Otherwise it is called \emph{multiple}. 
 
\begin{pr}
	\label{R-H}
	Let  $ f':\Sigma'\rightarrow S^2$ be a degeneration of a branched covering $ f:\Sigma\rightarrow S^2$. Then the sum of the multiplicities of critical values of $f'$ does not depend on $f'$
	and it is equal to $2n-\chi(\Sigma)$, where $n=\mathrm{deg}f=\mathrm{deg}f'$ is the covering degree.
\end{pr}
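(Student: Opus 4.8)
The plan is to compute the total multiplicity $M(f') := \sum_w \mathrm{mult}(w)$ directly and to show it depends only on $\chi(\Sigma)$. Summing over the (finitely many) critical points and using $\mathrm{ind}_v - 1 = \mu_v + (\mathrm{deg}_v f' - 1)$, I split
\[
M(f') = \sum_v \mu_v + \sum_v(\mathrm{deg}_v f' - 1),
\]
where a regular point contributes $0$ to the second sum, so that sum is unchanged if extended over all points. The proof then rests on two facts: a Riemann--Hurwitz identity for the ramification term, and the invariance of the combination $\chi(\Sigma') - \sum_v \mu_v$ under degeneration.

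For the ramification term I would pass to the normalization $\pi\colon\hat\Sigma'\to\Sigma'$ and set $\hat f' = f'\circ\pi$, a genuine branched covering of degree $n$ of the closed oriented surface $\hat\Sigma'$. The key local observation is that at a singular point $v$ the local degree $\mathrm{deg}_v f' = \sum_{i=1}^{m_v} k_i$ coincides with $\sum_i \mathrm{deg}_{\hat v_i}\hat f'$, where $\hat v_1,\dots,\hat v_{m_v}$ are the centers of the branch disks: since $\bar U_w$ contains no critical value other than $w$, each branch of $U_v$ maps to $\bar U_w$ as a cyclic cover $z\mapsto z^{k_i}$, so $\mathrm{deg}_{\hat v_i}\hat f' = k_i$ and $\hat v_i$ is the only preimage of $w$ on that branch. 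Comparing the ramification downstairs and upstairs gives
\[
\sum_v(\mathrm{deg}_v f' - 1) = \sum_{\hat v}(\mathrm{deg}_{\hat v}\hat f' - 1) + \sum_{v\ \mathrm{sing}}(m_v - 1),
\]
and combining the classical Riemann--Hurwitz formula $\sum_{\hat v}(\mathrm{deg}_{\hat v}\hat f' - 1) = 2n - \chi(\hat\Sigma')$ with the normalization identity $\chi(\hat\Sigma') = \chi(\Sigma') + \sum_{v\ \mathrm{sing}}(m_v-1)$ collapses this to $\sum_v(\mathrm{deg}_v f' - 1) = 2n - \chi(\Sigma')$. Hence $M(f') = 2n - \bigl(\chi(\Sigma') - \sum_v\mu_v\bigr)$.

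It remains to show $\chi(\Sigma') - \sum_v \mu_v = \chi(\Sigma)$. As $f'$ is a degeneration of $f$, the surface $\Sigma$ is obtained from $\Sigma'$ by a finite sequence of local perturbations, so it suffices to check that $\chi(\cdot) - \sum_v\mu_v$ is unchanged by one local perturbation $\Sigma_0\rightsquigarrow\Sigma_1$ replacing an admissible neighbourhood $U_v\subset\Sigma_0$ by $\tilde U_v\subset\Sigma_1$. Gluing along the $m_v$ boundary circles (of Euler characteristic $0$) yields $\chi(\Sigma_1) - \chi(\Sigma_0) = \chi(\tilde U_v) - \chi(U_v)$; the wedge of disks $U_v$ is contractible, so $\chi(U_v)=1$, while Remark \ref{mu} gives $\chi(\tilde U_v) = 1 - b_1(\tilde U_v)$, whence $\chi$ changes by $-b_1(\tilde U_v)$. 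On the other hand the total Milnor number changes by $\sum_{x\in\tilde U_v}\mu_x - \mu_v = -b_1(\tilde U_v)$, by the defining relation $b_1(\tilde U_v)+\sum_{x\in\tilde U_v}\mu_x=\mu_v$. The two changes agree, so $\chi - \sum_v\mu_v$ is a perturbation invariant; evaluating it on the nonsingular $\Sigma$, where $\sum_v\mu_v = 0$, gives $\chi(\Sigma)$.

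Combining the two conclusions yields $M(f') = 2n - \chi(\Sigma)$, which is manifestly independent of $f'$. I expect the main obstacle to be the local analysis at the singular points in the second paragraph: one must check carefully that the chosen component $U_v$ of $f'^{-1}(\bar U_w)$ is indeed an admissible neighbourhood on which $v$ is the only preimage of $w$, so that the identification of $\mathrm{deg}_v f'$ with $\sum_i k_i$, and hence with $\sum_i\mathrm{deg}_{\hat v_i}\hat f'$, is legitimate. Granting this, both Euler-characteristic computations are routine.
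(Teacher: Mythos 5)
Your proof is correct, but it takes a genuinely different route from the paper's. The paper argues in a single global Euler-characteristic count that uses the perturbation data directly: writing $\tilde U_v\subset\Sigma$ for the component of $f^{-1}(V)$ over a critical point $v$ of $f'$, Remark \ref{mu} gives $\mathrm{ind}_v=1-\chi(\tilde U_v)+\mathrm{deg}_vf'$, so the total multiplicity equals $n|\mathrm{cr}f'|-\sum_v\chi(\tilde U_v)$, and then $\chi(S^2\setminus V)=2-|\mathrm{cr}f'|$ together with multiplicativity of $\chi$ over the unbranched part $f^{-1}(S^2\setminus V)$ yields $2n-\chi(\Sigma)$ in one stroke. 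You instead factor the argument into two independent pieces: an intrinsic identity $M(f')=2n-\chi(\Sigma')+\sum_v\mu_v$, obtained from classical Riemann--Hurwitz on the normalization together with $\chi(\hat\Sigma')=\chi(\Sigma')+\sum_v(m_v-1)$, valid for \emph{any} branched covering from a singular closed surface, and the observation that $\chi-\sum_v\mu_v$ is invariant under local perturbation, which is where the degeneration hypothesis enters. This modularity is what your approach buys: the total multiplicity is seen to depend only on $(\Sigma',n)$ and not on the choice of the perturbation $f$, whereas the paper's shorter computation entangles the two ingredients. The obstacle you flag at the end is real but easily discharged, in two ways: either note that a hypothetical unramified extra preimage of $w$ on a branch would contribute $0$ to both sides of your comparison identity, so you only need $\mathrm{deg}_vf'=\sum_i\mathrm{deg}_{\hat v_i}\hat f'$, which is just the definition of root multiplicity at a wedge point; or verify uniqueness of the preimage directly, since $\chi(D_i)=1=k_i-\sum_{x\in f'^{-1}(w)\cap D_i}(\mathrm{deg}_xf'-1)$ forces a single preimage of local degree $k_i$. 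One small point worth making explicit in your step on the invariance of $\chi-\sum_v\mu_v$: you use that $U_v$ contains no singular point other than $v$ (so that exactly $\mu_v$ is removed) and that the glued boundary circles consist of nonsingular points; both are implicit in the paper's definition of perturbation, in particular in the relation $b_1(\tilde U_v)+\sum_{x\in\tilde U_v}\mu_x=\mu_v$, so this is a gloss shared with the paper rather than a gap.
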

\begin{proof}
	Let $ f $ be obtained from $ f' $ with a perturbation domain $ V $. For any critical point $ v $ of $ f' $, let $\tilde{U}_v\subset\Sigma$ be the corresponding connected component of $ f^{-1}(V) $. The index $ \mathrm{ind}_v $ is equal to $1-\chi(\tilde{U}_v)+ \mathrm{deg}_vf'$ due to Remark \ref{mu}. Let $ cr=\mathrm{cr} f'$ and let $|cr|$ be the number of the critical values. The sum of their multiplicities equals $$ \sum_{w\in cr  }\sum_{v\in f'^{-1}(w)}(\mathrm{ind}_v-1)=\sum_{w\in cr  }\sum_{v\in f'^{-1}(w)}(\mathrm{deg}_vf'-\chi(\tilde{U}_v))=n|cr  |-\sum_v\chi(\tilde{U}_v).$$ Since  $ \chi(S^2\setminus V)=2-|cr  |$ and $ f $ is an unbranched covering over $ S^2\setminus V$, we have $$n|cr  |-\sum_v\chi(\tilde{U}_v)=2n-\chi(f^{-1}(S^2\setminus V))-\sum_v\chi(\tilde{U}_v)=2n-\chi(\Sigma).$$	
\end{proof} 

\begin{pr}
	\label{degen}
For any branched covering $ f:\Sigma\rightarrow S^2$ and for any partition of $\mathrm{cr}f $, there exists a degeneration $ f' $ of $ f $ such that any critical value of $ f' $ corresponds to a partition class and has a multiplicity which is equal to the sum of multiplicities of all critical values in this class. 
\end{pr}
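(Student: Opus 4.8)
The plan is to treat each partition class independently and to build the degeneration by collapsing, over a disk enclosing the class, every connected component of the preimage to a single singular point. Write the partition as $\{C_1,\dots,C_r\}$ with $C_j=\{w_{j,1},\dots,w_{j,s_j}\}$. I would first choose pairwise disjoint closed disks $D_j\subset S^2$ so that each $D_j$ contains $C_j$ in its interior and no other critical values of $f$, and fix a distinguished interior point $w_j^\ast\in\mathrm{Int}\,D_j$ to serve as the merged critical value. Over a fixed $D_j$ the preimage $f^{-1}(D_j)$ splits into finitely many connected components $U_1,\dots,U_p$, each a compact orientable surface with boundary whose restriction $f|_{\partial U_i}$ to each of its $m_i$ boundary circles is a connected covering of $\partial D_j$ of some degree $k_{i,1},\dots,k_{i,m_i}$.

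The local model I would attach in place of $U_i$ is a wedge $W_i$ of $m_i$ disks based at a point $v_i$, carrying local genus $g_{v_i}$ equal to the genus of $U_i$, and I would map its $l$-th branch to $D_j$ by $z\mapsto z^{k_{i,l}}$, branched only over $w_j^\ast$. By construction $W_i$ has Milnor number $\mu_{v_i}=m_i-1+2g_{v_i}=1-\chi(U_i)$, so $U_i$ (being nonsingular, with no interior local genus) satisfies $b_1(U_i)+\sum_x\mu_x=\mu_{v_i}$ and $b_2(U_i)=0$; thus $U_i$ is an admissible replacement surface $\tilde U_{v_i}$ for $W_i$ in the sense of the perturbation definition, and $\mu_{v_i}=1-\chi(U_i)$ by Remark \ref{mu}. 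Since two connected coverings of a circle of equal degree are fiber-preserving homeomorphic, I can glue the $l$-th boundary circle of $W_i$ to the $l$-th boundary circle of $U_i$ over $\partial D_j$, and so assemble a map $f':\Sigma'\to S^2$ agreeing with $f$ outside $\bigcup_j f^{-1}(D_j)$ and equal to the wedge models inside. One checks $f'$ is a branched covering: on the normalization each branch is the power map $z\mapsto z^{k_{i,l}}$, and a one-branch singular point forces $k>1$ (a degree-one cover of a disk by a positive-genus surface is impossible), hence is automatically a ramification point.

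With $f'$ in hand, the local perturbation at each $v_i$ with perturbation domain $D_j$ replaces $W_i$ by $U_i$ and recovers $f$ there; composing these over all $i$ and all $j$ and invoking transitivity of perturbation (Remark \ref{trans}) shows that $f$ is a perturbation of $f'$ with domain $\bigcup_j D_j$, i.e. $f'$ is a degeneration of $f$. Since the only critical value of $f'$ inside $D_j$ is $w_j^\ast$ and $f'$ is unramified outside $\bigcup_j D_j$, the critical values of $f'$ are exactly the $w_j^\ast$, one per class. For the multiplicity, the preimage of $w_j^\ast$ is $\{v_1,\dots,v_p\}$, so by the definition of index it equals $\sum_i(\mathrm{ind}_{v_i}-1)=\sum_i(\deg_{v_i}f'-\chi(U_i))=n-\sum_i\chi(U_i)$, where $n=\deg f$. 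The Riemann--Hurwitz bookkeeping of Proposition \ref{R-H}, localized to the covering $f^{-1}(D_j)\to D_j$, identifies $n-\sum_i\chi(U_i)$ with the total ramification of $f$ over $D_j$, which is the sum of multiplicities of the critical values of $f$ lying in $C_j$.

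The step I expect to require the most care is the local construction and gluing in the second paragraph: matching the power-map models to the actual components $U_i$ along $\partial D_j$ by fiber-preserving homeomorphisms, and confirming that the resulting $f'$ genuinely satisfies the definition of a branched covering of a singular surface, in particular the condition at one-branch singular points. The multiplicity computation is then a routine repetition of the argument already given for Proposition \ref{R-H}, carried out disk by disk.
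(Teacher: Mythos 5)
Your proof is correct and takes essentially the same approach as the paper's: both replace each connected component of the preimage of a disk enclosing a partition class by a wedge of disks carrying power maps with a single critical point over the chosen interior point, and both derive the multiplicity statement from Remark \ref{mu} together with the Riemann--Hurwitz formula applied to $f^{-1}(\bar U_w)\to\bar U_w$. You merely make explicit some details the paper leaves implicit (the local genus assignment at the wedge points, the boundary gluing, and the index bookkeeping).
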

\begin{proof}
	
	 We chose pairwise disjoint topological closed disks is $S^2$ so that the interior of each disk contains
	 exactly one class of the partition. In each disk we pick an interior point.
	 Let $\Sigma'$ be a surface obtained from $\Sigma$ by replacing each connected component $\tilde U_v$
	 of the preimage of each chosen disk $\bar U_w$ ($w$ being the picked point in it)
	 with a wedge of disks $U_v=\bigvee_{i=1}^{m_v}D_i$ where $m_v$ is the number of boundary components of
	 $\tilde U_v$. 
	 It is clear that $ \Sigma'$ is a degeneration  of  $ \Sigma $. For the restriction of $ f $ to  $ \Sigma\setminus\bigcup_w\mathrm{Int}\bar{U}_w $, let $ f':\Sigma'\rightarrow S^2$ be its extension such that the restriction $f'$ to $D_i$
	 is a branched covering over $\bar{U}_w$ of the corresponding multiplicity with a unique critical  point $ v \in f'^{-1} (w) $. Since $ \Sigma $ is nonsingular, Remark \ref{mu} combined with Riemann-Hurwitz formula for  $f:f^{-1}(\bar{U}_w)\rightarrow\bar{U}_w$
	 (and with the definition of multiplicity applied to the critical value $w$) implies the required
	 statement about the multiplicities. 
\end{proof}

Suppose that the critical values of  branched coverings  $f_1:\Sigma_1\rightarrow S^2$ and $f_2:\Sigma_2\rightarrow S^2$ coincide together with their multiplicities. Suppose also that there exists an isotopy  $ \{\varphi_t:S^2\rightarrow S^2\}_{t\in[0,1]} $ of the identity mapping  such that for all $ t\in[0, 1] $ the homeomorphism $ \varphi_t $ fixes every critical value and there exists  an orientation preserving  homeomorphism $\beta:\Sigma_1\rightarrow \Sigma_2$   with $f_2\circ\beta=\varphi_1\circ f_1$. Then these coverings are called \emph{isomorphic}.
\begin{pr}
	\label{equiv}
		\emph{(Cf. \cite[Proof of Lemma 1.3.1]{NT})}.  Branched coverings  $f_1$ and $f_2$ are isomorphic if and only if there exists a homeomorphism
	$\alpha:\Sigma_1\to\Sigma_2$ such that $f_2\circ\alpha = f_1$. \qed
\end{pr}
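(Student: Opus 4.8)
The plan is to prove the two implications separately: the ``if'' part is formal, and the ``only if'' part rests on lifting the isotopy $\{\varphi_t\}$ to $\Sigma_1$. The ``if'' part is immediate. Given a homeomorphism $\alpha\colon\Sigma_1\to\Sigma_2$ of surfaces with $f_2\circ\alpha=f_1$, I take for $\{\varphi_t\}$ the constant isotopy $\varphi_t\equiv\mathrm{id}_{S^2}$ and put $\beta=\alpha$; then $f_2\circ\beta=f_1=\varphi_1\circ f_1$. Since $\alpha$ is a homeomorphism of surfaces (so it preserves the local genus function) intertwining the two orientation preserving branched coverings, it preserves $m_v$, $\mu_v$ and $\mathrm{deg}_vf$ at every point, whence the critical values of $f_1$ and $f_2$ coincide together with their multiplicities and $\alpha$ preserves orientation. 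Thus $f_1$ and $f_2$ are isomorphic in the sense of the definition.

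For the ``only if'' part I would first reduce to an isotopy--lifting statement. Assume given the isotopy $\{\varphi_t\}$ fixing $C:=\mathrm{cr}\,f_1=\mathrm{cr}\,f_2$ pointwise and the orientation preserving homeomorphism $\beta$ with $f_2\circ\beta=\varphi_1\circ f_1$. Suppose I can construct an isotopy $\{\psi_t\colon\Sigma_1\to\Sigma_1\}$ with $\psi_0=\mathrm{id}$ covering $\{\varphi_t\}$, that is
\[
f_1\circ\psi_t=\varphi_t\circ f_1\qquad(t\in[0,1]).
\]
Setting $\alpha=\beta\circ\psi_1^{-1}$ then gives a homeomorphism with
\[
f_2\circ\alpha=f_2\circ\beta\circ\psi_1^{-1}=\varphi_1\circ f_1\circ\psi_1^{-1}=f_1\circ\psi_1\circ\psi_1^{-1}=f_1,
\]
as required. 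Here $\psi_t$ will fix $f_1^{-1}(C)$ pointwise, hence fix all singular points and all points of nonzero local genus, so $\psi_1$ and $\alpha$ are in fact homeomorphisms of surfaces.

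To build the lift, note that over $S^2\setminus C$ the map $f_1$ restricts to an unbranched covering $f_1\colon\Sigma_1\setminus f_1^{-1}(C)\to S^2\setminus C$, and that $\varphi_t$, fixing $C$ pointwise, preserves $S^2\setminus C$. The map $(x,t)\mapsto\varphi_t(f_1(x))$ together with the lift $\mathrm{id}$ at $t=0$ satisfies the hypotheses of the homotopy lifting property for this covering, which yields a unique $\psi_t$ on $\Sigma_1\setminus f_1^{-1}(C)$ with $\psi_0=\mathrm{id}$ and $f_1\circ\psi_t=\varphi_t\circ f_1$; each $\psi_t$ is a self-homeomorphism of $\Sigma_1\setminus f_1^{-1}(C)$ (lift the reverse isotopy to invert it). I then extend by $\psi_t(v)=v$ for $v\in f_1^{-1}(C)$: this is the only continuous possibility, since any limit $y=\lim_{x\to v}\psi_t(x)$ must satisfy $f_1(y)=\varphi_t(f_1(v))=f_1(v)$ and so lie in the finite set $f_1^{-1}(f_1(v))$.

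The hard part is to check that this extension is continuous jointly in $(x,t)$ and that each $\psi_t$ is a homeomorphism; this is a purely local question at the points of $f_1^{-1}(C)$. Near an ordinary branch point $f_1$ is modeled by $z\mapsto z^{k}$ on a disk, and near a singular point of $\Sigma_1$ by a wedge $\bigvee_i(z\mapsto z^{k_i})$ of such maps, since the disk chosen around each critical value carries a single critical value and hence each branch is ramified only over its centre. In these models $f_1(x)\to w:=f_1(v)$ as $x\to v$, so $\varphi_t(f_1(x))\to w$ uniformly in $t$ and therefore $f_1(\psi_t(x))\to w$; as $\psi_t(x)$ stays in the same local branch for $x$ near $v$, this forces $\psi_t(x)\to v$, which yields continuity and shows the wedge structure at $v$ is preserved. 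Each $\psi_t$ is then a continuous bijection of the compact Hausdorff space $\Sigma_1$, hence a homeomorphism, and it preserves orientation because $\psi_0=\mathrm{id}$ does and orientation is locally constant in $t$. Carrying out this local verification is the technical core of the argument; the reduction and the covering-space lifting above are routine.
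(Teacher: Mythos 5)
Your proof is correct and follows essentially the same route as the paper: the paper gives no written argument beyond citing the proof of Lemma 1.3.1 in Natanzon--Turaev, which is precisely the isotopy-lifting argument you carry out (lift $\{\varphi_t\}$ through the unbranched covering over $S^2\setminus\mathrm{cr}\,f_1$, extend over the fibre of the critical values by the local models $z\mapsto z^k$ and wedges thereof, and set $\alpha=\beta\circ\psi_1^{-1}$). Your uniform-convergence justification for continuity of the extension at the branch and wedge points is the right verification, so nothing essential is missing.
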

 Thus the new definition of isomorphic coverings is equivalent to the one given in the introduction 
(notice that the definition from the introduction  can be extended word by word to singular covering surfaces).

It is clear that if two coverings are isomorphic then for any perturbation/degeneration $ f $ of  one of them there is a perturbation/degeneration of another one isomorphic to $ f $. So we can speak about a perturbation/degeneration of an  isomorphism class  of covering.

Let $V=\bigcup_{e\in E} D_e\subset S^2$ be a union of disjoint closed topological disks (here $e$ is a fixed interior point of the disk $D_e$).  Suppose that all the critical values of branched coverings  
 $ f_0:\Sigma_0\rightarrow S^2$ and $ f_1:\Sigma_1\rightarrow S^2$ are simple and lie in $\mathrm{Int}\, V $.
We say that these coverings are    $(V,E)$-\emph{equivalent} if there exist homeomorphisms    
 $ \alpha:\Sigma_0\rightarrow \Sigma_1$ and $\varphi:S^2\rightarrow S^2$ such that:   
\renewcommand{\labelenumi}{(\theenumi)}
\renewcommand{\theenumi}{\roman{enumi}}
\begin{enumerate}
	\item $ \varphi\circ f_0=f_1\circ\alpha $;
	\item  $ \varphi $ leaves the points of  $E\cup~(S^2~\setminus~\mathrm{Int}V)$ fixed and it is isotopic to the identity mapping in the class of such homeomorphisms leaving the points fixed.
	 
\end{enumerate}  
\begin{pr}
	\label{pert}
		For any branched covering $ f_0:\Sigma_0\rightarrow S^2$ there exists its perturbation $f_1:\Sigma_1\rightarrow S^2$  at a critical point $ v $ such that the surface  $\tilde{U}_v\subset \Sigma_1$ (involved in the definition of perturbation)  is nonsingular and  $\chi(\tilde{U}_v)=2-2g_v-m_v$. Moreover, $ f_1$ can be chosen so that all of its critical values  belonging to the perturbation domain $ \bar{U}_w $, $ w=f_0(v) $, are simple and  can be placed at any prescribed positions;
		in this case the covering $ f_1 $ is unique up to $ (\bar{U}_w,w) $-equivalence.
\end{pr}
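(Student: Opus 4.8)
The plan is to prove Proposition~\ref{pert} in two stages: first construct a perturbation with the prescribed Euler characteristic and simple critical values, then establish uniqueness up to $(\bar U_w,w)$-equivalence. Let me sketch how I would organize this.

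For **existence**, I would work inside the admissible neighbourhood $U_v$, which is a wedge $\bigvee_{i=1}^{m_v} D_i$ of $m_v$ disks. The map $f_0$ restricts on each boundary circle $\partial D_i$ to an unbranched covering of the circle $\partial\bar U_w$ of some degree $k_i$, with $\sum_i k_i = \deg_v f_0$. The goal is to fill in $U_v$ by a \emph{nonsingular} surface $\tilde U_v$ with $m_v$ boundary components, mapping to $\bar U_w$ as a branched covering whose only critical values in $\bar U_w$ are simple and at prescribed points. The Riemann--Hurwitz count fixes the topology: since $\tilde U_v$ must be nonsingular and cover $\bar U_w$ with total degree $\deg_v f_0$ and only simple branch points, the number of such branch points is forced, and requiring $\tilde U_v$ to be connected with $m_v$ boundary circles pins down $\chi(\tilde U_v)=2-2g_v-m_v$ via Remark~\ref{mu} together with the relation $\mu_v=2g_v+m_v-1$. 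The substantive input here is precisely the cited result (\cite[Proof of Corollary 2.2]{Nat}): given the boundary covering data $(k_1,\dots,k_{m_v})$ of the circle, there exists a branched covering of the disk extending it with exactly the prescribed simple critical values. One applies this to produce $\tilde U_v$, and then checks that the local genus and Milnor number bookkeeping gives the asserted Euler characteristic; placing the simple critical values at prescribed positions is immediate since the extension result allows the branch locus to be chosen freely in $\mathrm{Int}\,\bar U_w$.

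For **uniqueness** up to $(\bar U_w,w)$-equivalence, I would again invoke \cite[Proof of Corollary 2.2]{Nat}, which asserts that the extension is unique up to isotopy. Concretely, given two such perturbations $f_1,f_1'$ agreeing with $f_0$ outside $U_v$ and having the same simple critical values at the same positions, the uniqueness-up-to-isotopy of the disk extension yields a homeomorphism $\alpha$ of the filled surfaces intertwining $f_1$ and $f_1'$ over $\bar U_w$, together with an ambient isotopy $\varphi$ of $S^2$ supported in $\mathrm{Int}\,\bar U_w$ fixing $w$ and the boundary. Checking that this $\alpha$ and $\varphi$ satisfy conditions (i) and (ii) of the definition of $(\bar U_w,w)$-equivalence is then a direct verification: property (i) is the intertwining relation, and property (ii) is exactly the statement that $\varphi$ fixes $\{w\}\cup(S^2\setminus\mathrm{Int}\,\bar U_w)$ and is isotopic to the identity rel these points.

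The **main obstacle** I anticipate is the bookkeeping at the level of a \emph{single} connected component $U_v$ versus the wedge structure: one must be careful that the extension result for the disk is applied to each boundary circle's covering data consistently so that the resulting $\tilde U_v$ is connected with exactly $m_v$ boundary components and the Euler-characteristic formula $\chi(\tilde U_v)=2-2g_v-m_v$ comes out correctly rather than, say, producing several components or the wrong genus. In other words, the delicate point is translating the purely one-circle statement of \cite[Proof of Corollary 2.2]{Nat} into the multi-boundary setting and confirming that the free choice of branch-point positions is compatible with keeping $\tilde U_v$ connected of the required topological type. The isotopy-uniqueness claim, by contrast, should follow formally once the existence is correctly set up.
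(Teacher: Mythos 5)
Your outline follows the same route as the paper's proof: take a connected \emph{nonsingular} surface $\tilde U_v$ of genus $g_v$ with $m_v$ boundary circles (whence $\chi(\tilde U_v)=2-2g_v-m_v$ directly, no Riemann--Hurwitz needed for that), fill it in by a branched covering of $\bar U_w$ agreeing with $f_0$ on the boundary, and derive uniqueness from the isotopy-uniqueness of such extensions. But at the two places where the actual work happens you defer to \cite[Proof of Corollary 2.2]{Nat}, and in each case the deferral hides a genuine gap. For existence, the ``main obstacle'' you name at the end --- getting $\tilde U_v$ connected of the correct genus with freely placed simple branch points --- is precisely the content of the step, and you leave it unresolved. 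The paper resolves it combinatorially: one decomposes the product of $m_v$ disjoint cycles of lengths $k_1,\dots,k_{m_v}$ (the monodromy of $f_0$ along $\partial\bar U_w$) into a product of transpositions generating a \emph{transitive} subgroup of the symmetric group on $\deg_vf_0$ letters (see \cite[Corollary 10.12]{Degt}). Transitivity is exactly what forces the covering surface to be connected, and the number of transpositions, which Riemann--Hurwitz fixes at $\mathrm{ind}_v-1$, controls the genus; free placement of the branch points is then automatic. Incidentally, your worry about translating a ``purely one-circle statement'' into a multi-boundary setting is misplaced: the single circle in the Natanzon statement is the \emph{target} $\partial\bar U_w$, and the statement already concerns a connected source surface with arbitrarily many boundary circles. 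What that statement does not hand you is a covering from the prescribed surface with prescribed simple branch points --- that is what the transposition factorization supplies, and your proposal never supplies it.

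For uniqueness there is a second, smaller but genuine, missing step. You assert that the extension-uniqueness yields an ambient isotopy supported in $\mathrm{Int}\,\bar U_w$ \emph{fixing $w$}, and that condition (ii) of $(\bar U_w,w)$-equivalence is then ``a direct verification'' that ``should follow formally.'' It does not: the isotopy furnished by uniqueness-up-to-isotopy is rel $\partial\bar U_w$ only, with no control over the track of the interior marked point $w$, whereas $(\bar U_w,w)$-equivalence requires an isotopy through homeomorphisms fixing $w$ at every time. The paper flags exactly this point and handles it by a separate argument --- lifting the isotopy to the covering space, cf.\ \cite[Proof of Lemma 1.3.1]{NT}. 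So your plan is structurally correct and matches the paper's, but as written it would stall at the construction of the transitive transposition factorization and at the rel-$w$ refinement of the isotopy.
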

\begin{proof}
 Consider a connected nonsingular surface $\tilde{U}_v$ of genus
$g_v$ with $m_v$ boundary components. We have   $\chi(\tilde{U}_v)=2-2g_v-m_v$. 
	Let $f:\tilde U_v\to\bar U_w$ be a branched covering with critical values at the prescribed positions such that  $f=f_0$ on $\partial U_v = \partial\tilde U_v$. To construct the covering it is sufficient to decompose 
	 $m_v$ disjoint cycles (the monodromy of the covering along  $\partial\bar U_w$) into a product of transpositions that generate a transitive group of permutations (see, e.g. 	\cite[Corollary 10.12]{Degt}). 
Let  $\Sigma_1=(\Sigma_0\setminus U_v)\cup\tilde{U}_v$ and put  $ f_1=f_0 $ on $\Sigma_0\setminus U_v$ and $ f_1=f $ on $\tilde{U}_v$. 

The uniqueness follows from \cite[Proof of Corollary 2.2]{Nat} (see also 
	\cite[Corollary 10.12]{Degt}). The proof of the fact that the corresponding isotopy fixes $ w $ uses the lifting the isotopy to the covering space (cf. \cite[Proof of Lemma 1.3.1]{NT}).
\end{proof}

\subsection{Topology on the set of isomorphism classes of branched coverings}
\label{Xn}
Remind that $ \Sigma $ is a fixed nonsingular oriented closed surface. As in  the introduction, let  $X_{\Sigma,n}$ denote the set of isomorphism classes of $ n $-folded branched coverings $ \Sigma\rightarrow S^2 $ and $\bar{X}_{\Sigma,n} $ denote the set of all degenerations of isomorphism classes of coverings from $X_{\Sigma,n}$. 

Given  $f$ with $ [f]\in\bar{X}_{\Sigma,n} $ and a perturbation domain $ V $ of $ f $,  
let $ U_{[f],V}\subset\bar{X}_{\Sigma,n} $ be the set  of isomorphism classes of all perturbations of $ f $ whose perturbation domain is $ V $. In other words, these are the classes of perturbations of coverings in $ [f] $ such that all their critical values belong to $ V $.
\begin{pr}
	\label{base} 
	The family  $$
	\{U_{[f],V}\mid \text{$f\in\bar{X}_{\Sigma,n}$ \emph{and $V$ is a perturbation domain of} $f$}  \}
	$$
	is a base of topology on  $\bar{X}_{\Sigma,n} $. The subset $X_{\Sigma,n}$ is dense in the space  $\bar{X}_{\Sigma,n} $ endowed with this topology.	    
\end{pr}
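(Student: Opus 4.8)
The plan is to verify the two standard base axioms and then to establish density by desingularizing within a prescribed domain. Recall that a family $\mathcal B$ of subsets of a set $Y$ is a base of a topology precisely when $\mathcal B$ covers $Y$ and, for all $B_1,B_2\in\mathcal B$ and every $y\in B_1\cap B_2$, some member of $\mathcal B$ contains $y$ and is contained in $B_1\cap B_2$. For the covering axiom I note that each $[f]\in\bar X_{\Sigma,n}$ lies in $U_{[f],V}$ for every perturbation domain $V$ of $f$, since $f$ is (via the empty composition of local perturbations) a perturbation of itself with domain $V$; such a $V$ exists because $f$ has only finitely many critical values.

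For the intersection axiom, suppose $[g]\in U_{[f_1],V_1}\cap U_{[f_2],V_2}$. Choosing representatives and using that isomorphic coverings have isomorphic perturbations, I may take a single representative $g$ which is simultaneously a perturbation of $f_1$ with domain $V_1$ and of $f_2$ with domain $V_2$. The key geometric observation is that every critical value of $g$ lies in the interior of each $V_i$: outside $\mathrm{Int}\,V_i$ the covering $g$ agrees with $f_i$ and is unbranched there, and $\partial V_i$ carries no critical value. Hence $\mathrm{cr}\,g\subset \mathrm{Int}\,V_1\cap\mathrm{Int}\,V_2=\mathrm{Int}(V_1\cap V_2)$, so I can inscribe in this open set a union $W$ of pairwise disjoint closed disks, one around each critical value of $g$; this $W$ is a perturbation domain of $g$ with $W\subseteq V_1\cap V_2$. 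Now for any $[h]\in U_{[g],W}$ the covering $h$ is a perturbation of $g$ with domain $W\subseteq V_i$, while $g$ is a perturbation of $f_i$ with domain $V_i$; by transitivity of perturbation (Remark \ref{trans}), $h$ is a perturbation of $f_i$ with domain $V_i$, so $[h]\in U_{[f_i],V_i}$. Thus $[g]\in U_{[g],W}\subseteq U_{[f_1],V_1}\cap U_{[f_2],V_2}$, as required.

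For density I must show that every nonempty basic set $U_{[f],V}$ meets $X_{\Sigma,n}$. Writing $f:\Sigma'\to S^2$ as a degeneration of some $f_0:\Sigma\to S^2$ with $[f_0]\in X_{\Sigma,n}$, I apply Proposition \ref{pert} at each critical point of $f$ to obtain a perturbation $f_1:\Sigma_1\to S^2$, with domain $V$, in which every $\tilde U_v$ is a connected nonsingular surface; then $\Sigma_1$ is nonsingular and closed. Replacing, at each step, a connected piece (a wedge $U_v$) by the connected piece $\tilde U_v$ carrying the same boundary circles preserves connectedness, so $\Sigma_1$ is connected. It remains to identify $\Sigma_1$ with $\Sigma$. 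Since $f$ is a degeneration of both the nonsingular coverings $f_0:\Sigma\to S^2$ and $f_1:\Sigma_1\to S^2$, Proposition \ref{R-H} shows that the sum of the multiplicities of the critical values of $f$ equals simultaneously $2n-\chi(\Sigma)$ and $2n-\chi(\Sigma_1)$, whence $\chi(\Sigma_1)=\chi(\Sigma)$. Two connected closed oriented surfaces with equal Euler characteristic are homeomorphic, so $\Sigma_1\cong\Sigma$ and $[f_1]\in X_{\Sigma,n}\cap U_{[f],V}$.

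I expect the main obstacle to be the intersection axiom: the delicate point is the well-definedness across isomorphic representatives together with the observation that the critical values of a perturbation sit in the interior of its perturbation domain, which is exactly what allows a smaller domain $W\subseteq V_1\cap V_2$ to be inscribed so that Remark \ref{trans} applies. In the density part the only real content is the identification $\Sigma_1\cong\Sigma$, for which the Euler-characteristic bookkeeping of Proposition \ref{R-H} and the connectedness of $\Sigma_1$ must both be checked.
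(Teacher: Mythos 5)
Your proof is correct, and the two halves should be judged separately. For the base axioms you follow exactly the paper's route: the paper's whole argument is that if $[f]\in U_{[f_1],V_1}\cap U_{[f_2],V_2}$ then for a perturbation domain $V\subset V_1\cap V_2$ of $f$ one has $U_{[f],V}\subset U_{[f_1],V_1}\cap U_{[f_2],V_2}$ by Remark \ref{trans}; you add the (correct, and tacit in the paper) justification that such a $V$ exists because $\mathrm{cr}\,f\subset\mathrm{Int}\,V_1\cap\mathrm{Int}\,V_2$, consistent with the paper's convention that critical values of a perturbation lie in the interior of its domain. For density, however, you genuinely diverge: the paper dismisses it in one line (``immediately follows from the definition of the topology''), which is rather glib, since membership of $[f]$ in $\bar X_{\Sigma,n}$ only provides a perturbation of $f$ in $X_{\Sigma,n}$ with \emph{some} domain, not with the prescribed $V$. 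You instead build, via Proposition \ref{pert}, a nonsingular perturbation $f_1:\Sigma_1\to S^2$ with domain $V$ and identify $\Sigma_1\cong\Sigma$ by Euler-characteristic bookkeeping; this is essentially a simplified form of what the paper itself does later, inside the proof of Proposition \ref{nspert}, where $\Sigma_1\cong\Sigma$ is obtained by matching the local pieces $\tilde U_v\cong\tilde U'_v$ via Remark \ref{mu}. Your global route buys brevity but carries two small debts that the paper's piecewise argument does not: applying Proposition \ref{R-H} to $f$ as a degeneration of $f_1:\Sigma_1\to S^2$ stretches its literal statement (which is phrased for the fixed surface $\Sigma$), though its proof works verbatim for any closed nonsingular covering surface since the sum of multiplicities is intrinsic to $f$; and the step ``equal Euler characteristic implies homeomorphic'' needs $\Sigma$ (hence $\Sigma_1$) connected, an assumption the paper makes only implicitly (cf.\ Remark \ref{norm}), whereas the piece-by-piece identification in Proposition \ref{nspert} would survive without it. Neither debt is a genuine gap under the paper's standing conventions, so your argument stands, and in the density part it is actually more complete than the paper's own one-line claim.
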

\begin{proof}
	It is clear that the union of all members of the family coincides with  $\bar{X}_{\Sigma,n} $. Further, if $[f]\in U_{[f_1],V_1}$ then for a perturbation domain  $ V\subset V_1$ of $ f $ we have  $U_{[f],V}\subset U_{[f_1],V_1}$ by the transitivity of perturbation (see Remark \ref{trans}). Thus, if $[f]\in U_{[f_1],V_1}\cap  U_{[f_2],V_2} $ then for a perturbation domain $ V\subset V_1\cap V_2 $  of $ f $ we have $U_{[f],V}\subset U_{[f_1],V_1}\cap  U_{[f_2],V_2} $. Hence $ \{U_{[f],V}\}$ is  a base of topology on  $\bar{X}_{\Sigma,n} $.
	
	The fact that $ X_{\Sigma,n} $ is dense immediately follows from the definition of the topology on $\bar{X}_{\Sigma,n} $.
\end{proof}
Recall that $ H(\Sigma,n)\subset X_{\Sigma,n}$ is the subset consisting of  isomorphism classes of  coverings with  simple critical values (see the introduction).  
\begin{pr}
	\label{nspert}
	For any branched covering $ f_0 $ with $ [f_0]\in\bar{X}_{\Sigma,n} $ and for its perturbation domain $ V $ there is a class  $[f]\in U_{[f_0],V}\cap H(\Sigma,n)$, where 
	$ f $ is unique up to  $(V,\mathrm{cr}  f_0)$-equivalence.
\end{pr}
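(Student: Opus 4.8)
The statement splits into existence and uniqueness, and both reduce to the local analysis already carried out in Proposition \ref{pert}.

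For existence, I would write $V=\bigcup_{w\in\mathrm{cr}\,f_0}\bar U_w$ and apply Proposition \ref{pert} at each critical point $v$ of $f_0$. Since the disks $\bar U_w$ are pairwise disjoint, these local perturbations are performed independently, and by the transitivity of perturbation (Remark \ref{trans}) their composition $f:\Sigma'\to S^2$ is a single perturbation of $f_0$ with perturbation domain $V$. By the choice made in Proposition \ref{pert}, each replacement surface $\tilde U_v$ is nonsingular of genus $g_v$ with $m_v$ boundary components, so $\Sigma'$ is nonsingular, and all critical values of $f$ are simple and lie in $\mathrm{Int}\,V$; thus $f$ represents a class in $U_{[f_0],V}$ with all critical values simple. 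It remains to identify $\Sigma'$ with $\Sigma$. The number of simple critical values created equals the total multiplicity of the critical values of $f_0$, which by Proposition \ref{R-H} is $2n-\chi(\Sigma)$; applying the Riemann-Hurwitz formula to the nonsingular covering $f$ gives $\chi(\Sigma')=2n-(2n-\chi(\Sigma))=\chi(\Sigma)$. Since $[f_0]\in\bar X_{\Sigma,n}$, the surface $\Sigma'$ is obtained from $\Sigma_0\setminus\bigcup_v\mathrm{Int}\,U_v$ by gluing the connected pieces $\tilde U_v$ along exactly the boundary circles along which the connected pieces recovering $\Sigma$ from $f_0$ are glued, so $\Sigma'$ has the same number of components as $\Sigma$; being oriented closed surfaces of equal Euler characteristic and equal number of components, $\Sigma'\cong\Sigma$, whence $[f]\in U_{[f_0],V}\cap H(\Sigma,n)$.

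For uniqueness, let $f:\Sigma'\to S^2$ and $f'':\Sigma''\to S^2$ both represent classes in $U_{[f_0],V}\cap H(\Sigma,n)$; both agree with $f_0$ on $\Sigma_0\setminus\bigcup_v\mathrm{Int}\,U_v$ and have all critical values simple and contained in $\mathrm{Int}\,V$. Over each disk $\bar U_w$ the restrictions of $f$ and $f''$ are perturbations of $f_0|_{f_0^{-1}(\bar U_w)}$ of the type produced by Proposition \ref{pert}, so that proposition supplies, for every $w\in\mathrm{cr}\,f_0$, a $(\bar U_w,w)$-equivalence: a homeomorphism $\alpha_w:\tilde U_v\to\tilde U''_v$ and an isotopy $\{\varphi^w_t\}$ supported in $\bar U_w$, fixing $w$ and $\partial\bar U_w$, with $\varphi^w_1\circ f=f''\circ\alpha_w$ over $\bar U_w$. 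The plan is to patch these into a single $(V,\mathrm{cr}\,f_0)$-equivalence: set $\varphi=\varphi^w_1$ on each $\bar U_w$ and $\varphi=\mathrm{id}$ elsewhere, so that the disjointly supported local isotopies combine to an isotopy of the identity fixing $\mathrm{cr}\,f_0\cup(S^2\setminus\mathrm{Int}\,V)$, and glue the $\alpha_w$ to the identity on $\Sigma_0\setminus\bigcup_v\mathrm{Int}\,U_v$ to obtain $\alpha:\Sigma'\to\Sigma''$ with $\varphi\circ f=f''\circ\alpha$.

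The main obstacle is precisely this gluing of the $\alpha_w$: each $\alpha_w$ covers $\varphi^w_1$, which is the identity on $\partial\bar U_w$, so a priori $\alpha_w|_{\partial U_v}$ is only a deck transformation of the circle covering $\partial U_v\to\partial\bar U_w$ and need not be the identity, which would obstruct continuity along the common boundary. I would resolve this by invoking the uniqueness in Proposition \ref{pert} in its sharp, rel-boundary form: its proof lifts the isotopy to the covering space (as in \cite[Proof of Corollary 2.2]{Nat}, cf. \cite[Proof of Lemma 1.3.1]{NT}), and the lift can be chosen to be the identity on $\partial U_v$, so that $\alpha_w$ is normalized to equal the identity on $\partial U_v$. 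With this normalization the pieces $\alpha_w$ glue continuously to the identity outside, producing the global homeomorphism $\alpha$, and the identity $\varphi\circ f=f''\circ\alpha$ is then checked piece by piece, reducing on $\Sigma_0\setminus\bigcup_v\mathrm{Int}\,U_v$ to $f=f_0=f''$ and over each $\bar U_w$ to the local identity already furnished by Proposition \ref{pert}. This yields the desired $(V,\mathrm{cr}\,f_0)$-equivalence of $f$ and $f''$.
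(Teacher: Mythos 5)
Your overall strategy matches the paper's: perturb locally via Proposition \ref{pert} at each critical point, compose the local perturbations using Remark \ref{trans}, identify the resulting surface with $\Sigma$, and reduce uniqueness to the local uniqueness clause of Proposition \ref{pert}. Your uniqueness discussion is actually more explicit than the paper's, which disposes of it in one sentence by citing Proposition \ref{pert}; the gluing obstacle you isolate --- that $\alpha_w|_{\partial U_v}$ is a priori only a deck transformation of the boundary circle covering --- is real, and your normalization is consistent with the cited sources: since the isotopy fixes $\partial\bar U_w$ pointwise at every time, its lift restricted to a boundary circle is a continuous family of deck transformations of a finite covering starting at the identity, hence constant, which is precisely the rel-boundary form of uniqueness that the proof in \cite{Nat} supplies. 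So that half is sound.

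The genuine gap is at the last step of your existence argument. From $\chi(\Sigma')=\chi(\Sigma)$ together with equality of the numbers of connected components you conclude $\Sigma'\cong\Sigma$; this inference is false for disconnected closed oriented surfaces. For instance, the disjoint union of a sphere with a genus two surface and the disjoint union of two tori both have Euler characteristic $0$ and two components, but they are not homeomorphic: the total Euler characteristic does not control how the genus is distributed among components. The paper never assumes $\Sigma$ connected, and its own proof avoids the problem by matching the pieces individually. It takes the perturbation $f'$ of $f_0$ with $[f']\in X_{\Sigma,n}$ (which you also invoke, but only to count components) and computes, via Remark \ref{mu}, that each connected component $\tilde U'_v$ of $f'^{-1}(V')$ satisfies $\chi(\tilde U'_v)=1-\mu_v=2-2g_v-m_v=\chi(\tilde U_v)$; since $\tilde U_v$ and $\tilde U'_v$ are connected, nonsingular, oriented, and have the same number $m_v$ of boundary circles and equal Euler characteristic, they are homeomorphic, and gluing these homeomorphisms along the common part $f^{-1}(S^2\setminus\mathrm{Int}V)\cong f'^{-1}(S^2\setminus\mathrm{Int}V')$ yields $\Sigma'\cong\Sigma$ regardless of connectedness. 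You already have every ingredient for this (connected pieces, identical gluing pattern, and the formula $\chi(\tilde U_v)=2-2g_v-m_v$ from Proposition \ref{pert}); the repair is simply to add the piecewise computation $\chi(\tilde U'_v)=1-\mu_v$ from Remark \ref{mu} and conclude piece by piece instead of from the global Euler characteristic --- which also makes your appeal to Proposition \ref{R-H} unnecessary. If $\Sigma$ were assumed connected your shortcut would be valid, but the paper's argument does not need that assumption and yours should not either.
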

\begin{proof}
	Let $ v $ be a critical point of $ f_0 $ and $ \bar{U}_w$ be a connected component of $ V $, which is a closed neighbourhood of the point $w=f_0(v)$. Due to Proposition \ref{pert}, there is a perturbation $[f_v]\in U_{[f_0],V}$ such that its critical points lying in $ \bar{U}_w$ are simple, the surface $\tilde{U}_v$ (the connected component of $f^{-1}(V) $ corresponding to $ v $) is nonsingular, and $\chi(\tilde{U}_v)=2-2g_v-m_v=1-\mu_v$. Let $f:\Sigma_1\rightarrow S^2$ be the composition of all the local perturbations $f_v$.  	
	
	Since $ [f_0]\in\bar{X}_{\Sigma,n} $, there is a perturbation $ f' $ of $ f_0 $ with $[f']\in X_{\Sigma,n}$ and with a perturbation domain $ V'$. Let us prove that the surfaces $ \Sigma $ and $ \Sigma_1 $ are 	homeomorphic. 
	Obviously, $ S^2\setminus \mathrm{Int}V $ is homeomorphic to $ S^2\setminus \mathrm{Int}V'$ and $ f_0 $ is an unbranched covering. Hence, by the definition of perturbation of branched covering, we have $f^{-1}(S^2\setminus \mathrm{Int}V)=f_0^{-1}(S^2\setminus \mathrm{Int}V)\cong f_0^{-1}(S^2\setminus \mathrm{Int}V')=f'^{-1}(S^2\setminus \mathrm{Int}V') $.
	Let $\tilde{U}'_v$ be a connected component of $f'^{-1}(V') $ corresponding to a critical point $ v $ of $ f_0 $. The above arguments combined with 
	 Remark \ref{mu}  yield  $\chi(\tilde{U'}_v)=1-\mu_v=\chi(\tilde{U}_v)$. The boundaries of $\tilde{U}_v$, $\tilde{U}'_v$ consist of $ m_v $ circles. Thus $\tilde{U}_v\cong\tilde{U}'_v$. Hence $f^{-1}(V)\cong f'^{-1}(V') $ and $ \Sigma_1\cong\Sigma$. 
	
	By Proposition  \ref{pert}, the constructed covering $ f $ is unique up to $(V,\mathrm{cr}  f_0)$-equivalence.
\end{proof}
  By Proposition \ref{R-H}, for  $[f]\in\bar{X}_{\Sigma,n}$ the sum of  multiplicities of critical values  
  is $2n-\chi(\Sigma)$. We extend the mapping $ LL $ (see the introduction) to $ \bar{X}_{\Sigma,n} $ as follows. For $ k=2n-\chi(\Sigma) $ we define $ LL:\bar{X}_{\Sigma,n} \rightarrow P^k$ by setting $LL([f])$ to be the set of all critical values of $ f $ taken with their multiplicities.

A perturbation domain $ V $ of $ f $ determines a neighbourhood of each critical value of $ f $ and  thus, defines a neighbourhood $ W $ of the point $ LL[f] $.
\begin{lem}
	\label{V}
	$W=LL(U_{[f],V})$.	     
\end{lem}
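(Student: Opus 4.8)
The plan is to prove the two inclusions $LL(U_{[f],V})\subseteq W$ and $W\subseteq LL(U_{[f],V})$ separately, after first making explicit what the neighbourhood $W$ is. Writing $V=\bigcup_w\bar U_w$ with one disk around each critical value $w$ of $f$, and letting $m_w$ denote the multiplicity of $w$, we have $LL[f]=\sum_w m_w w\in P^k$, and the neighbourhood determined by $V$ is
$$W=\Big\{\,D\in P^k : \operatorname{supp}D\subset\textstyle\bigcup_w\operatorname{Int}\bar U_w\ \text{and}\ \deg\big(D|_{\operatorname{Int}\bar U_w}\big)=m_w\ \text{for every }w\,\Big\},$$
which is exactly the standard basic neighbourhood of $\sum_w m_w w$ in the symmetric power. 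I would record this description at the outset, since both inclusions are assertions about the number of points of $LL[f_1]$, counted with multiplicity, lying inside each disk.

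For $LL(U_{[f],V})\subseteq W$, I would take any perturbation $f_1$ of $f$ with perturbation domain $V$. By the definition of perturbation each local modification is supported over a single $\bar U_w$, and the replacing covering $\tilde U_v\to\bar U_w$ agrees with $f$ on $\partial\bar U_w$; since $\partial\bar U_w$ carries no critical value of $f$, all critical values of $f_1$ lie in $\bigcup_w\operatorname{Int}\bar U_w$. It then remains to check that the total multiplicity of the critical values of $f_1$ inside each $\operatorname{Int}\bar U_w$ equals $m_w$. This is the local form of Proposition \ref{R-H}: the computation in that proof, applied to $f_1\colon f_1^{-1}(\bar U_w)\to\bar U_w$ and $f\colon f^{-1}(\bar U_w)\to\bar U_w$ (which share the same degree and the same boundary monodromy over $\partial\bar U_w$), shows that the sum of multiplicities over $\bar U_w$ is determined by this boundary data alone, hence is unchanged by the perturbation. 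Therefore $LL[f_1]\in W$.

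The substance of the lemma is the reverse inclusion, i.e.\ the realization of every admissible divisor. Given $D\in W$ with $D|_{\operatorname{Int}\bar U_w}=\sum_j d_{w,j}\,q_{w,j}$, I would first invoke Proposition \ref{nspert}, together with the freedom to prescribe positions granted by Proposition \ref{pert}, to produce a nonsingular perturbation $\hat f$ of $f$ with domain $V$ whose critical values are all simple, placing $d_{w,j}$ of them in a small disk $\bar U_{w,j}\subset\operatorname{Int}\bar U_w$ about each $q_{w,j}$; by the proof of Proposition \ref{nspert} the source of $\hat f$ is homeomorphic to $\Sigma$. I would then apply Proposition \ref{degen} to $\hat f$ with the partition of $\operatorname{cr}\hat f$ that groups the $d_{w,j}$ simple values clustered near $q_{w,j}$ into a single class, and with $q_{w,j}$ taken as the marked interior point of $\bar U_{w,j}$. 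This produces a degeneration $f_1$ of $\hat f$ whose critical values are precisely the $q_{w,j}$ with multiplicities $d_{w,j}$, so that $LL[f_1]=D$.

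The main obstacle, and the step I would treat with most care, is to verify that this $f_1$, built as a \emph{degeneration} of $\hat f$, is in fact a \emph{perturbation} of the original $f$ with domain $V$, so that $[f_1]\in U_{[f],V}$; note that Remark \ref{trans} does not apply directly, since it chains perturbations in one direction only. I would argue straight from the definitions: the degeneration of Proposition \ref{degen} alters $\hat f$ only over the small disks $\bar U_{w,j}$, so $f_1=\hat f=f$ on $\Sigma\setminus\operatorname{Int}V$ and on each $\partial\bar U_w$. Over each $\bar U_w$ the map $f_1$ is therefore a branched covering of $\bar U_w$ agreeing with $f$ on the boundary, so on each component $U_v$ of $f^{-1}(\bar U_w)$ it replaces $U_v$ by a compact orientable piece $\tilde U_v$ with the same $m_v$ boundary circles; the condition $b_1(\tilde U_v)+\sum_x\mu_x=\mu_v$ demanded of a perturbation then follows from Riemann--Hurwitz exactly as in Proposition \ref{R-H} and Remark \ref{mu}, since the degree and boundary monodromy over $\bar U_w$ are unchanged. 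Hence $f_1$ is a perturbation of $f$ with domain $V$, whence $D=LL[f_1]\in LL(U_{[f],V})$, completing the proof.
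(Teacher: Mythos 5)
Your proposal is correct and takes essentially the same route as the paper: the forward inclusion is immediate from the definitions, and for the reverse inclusion the paper likewise perturbs $f$ to a covering with simple critical values via Proposition \ref{nspert} and then applies Proposition \ref{degen} to a grouping of those values so as to place the new critical values at the points of the given divisor, the only cosmetic difference being that the paper chooses disks $D_1,\ldots,D_p$ around the target points containing the right numbers of the simple values after the fact, rather than pre-placing the simple values in clusters using Proposition \ref{pert}. Your explicit verification that the resulting degeneration of $\hat f$ is again a perturbation of $f$ with domain $V$ is a point the paper leaves implicit, and your Euler-characteristic bookkeeping via Remark \ref{mu} (collapsing a subsurface trades first Betti number for Milnor number) is the sound way to justify it, though note that this identity comes from the construction of the degeneration itself, not merely from the invariance of the degree and boundary monodromy over $\bar U_w$.
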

\begin{proof}
	By definition, $ LL[f']\in W$ for any $[f']\in U_{[f],V}$.
	
	For the inverse inclusion we choose $ x\in W$ and  construct a covering $ f' $ with  $ LL[f']=x$. 
	By Proposition \ref{nspert}, there exists a covering $\tilde{f}$ with $[\tilde{f}]\in U_{[f],V}$ such that all of its critical values are simple. For any critical value
	 $ w $ of $ f$ let $ \bar{U}_w$ be a connected component of $ V $, which is a closed neighbourhood of $w$. By the definition of $ W $, it follows that all  points of  $x\cap\bar{U}_w=\{x_1,\ldots,x_p\}$ lie in the interior of  $ \bar{U}_w$ and that the sum $\sum_{i=1}^p \nu_i $ of their multiplicities equals the multiplicity $ \nu_w $ of $ w $. On the other hand, the set $M_w$ of critical values of  $\tilde{f}$ that are in $ \bar{U}_w$ consists of $ \nu_w $ interior points of $ \bar{U}_w$. So we can choose disjoint closed topological disks  $ D_1,\ldots,D_p\subset \bar{U}_w$ such that $ \mathrm{Int}D_i $ contains the point $ x_i $ and $ \nu_i $ points of $M_w$. By applying  Proposition \ref{degen} to the obtained partition of the set of critical values of $\tilde{f}$, we obtain the desired covering $ f' $. By the proof of Proposition \ref{degen},  the critical values of $ f'$ with their multiplicities can be chosen at the points of $ x $. 	
\end{proof}
\begin{pr}
	\label{LL} 
	The map $ LL $ is surjective, continuous, open and finite.    
\end{pr}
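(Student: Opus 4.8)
The plan is to read continuity and openness straight off Lemma \ref{V}, obtain surjectivity by a degeneration argument, and prove finiteness by a monodromy count; I expect only the finiteness to require genuine work.

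First I would dispose of continuity and openness together. Given $[f]\in\bar X_{\Sigma,n}$ and an open set $O\subseteq P^k$ containing $LL[f]$, I would shrink the component disks of a perturbation domain $V$ of $f$ around the critical values of $f$ until the associated neighbourhood $W$ of $LL[f]$ satisfies $W\subseteq O$; since $LL(U_{[f],V})=W$ by Lemma \ref{V} and $U_{[f],V}$ is a basic open neighbourhood of $[f]$, this gives continuity. For openness it suffices to examine the image of each basic open set, and again $LL(U_{[f],V})=W$ is open in $P^k$, because $W$ is by construction the set of divisors carrying the prescribed total multiplicity inside each component disk of $V$ and no mass outside $V$ — a basic open set of the symmetric power $P^k$. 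Thus $LL$ carries the base $\{U_{[f],V}\}$ to open sets and is an open map.

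For surjectivity, fix a divisor $x=\sum_{i=1}^p m_iw_i\in P^k$ with the $w_i$ distinct and $\sum_i m_i=k$. I would enclose each $w_i$ in a small closed disk $D_i$, the $D_i$ being pairwise disjoint. Since there is at least one covering in $X_{\Sigma,n}$, the relevant Hurwitz number is positive, and as it does not depend on the positions of the branch points (Riemann existence, cf. \cite[Corollary 10.12]{Degt}), there is a simple covering $g\in H(\Sigma,n)$ whose $k$ distinct branch points are distributed so that exactly $m_i$ of them lie in $\mathrm{Int}\,D_i$; these then exhaust all $k$ branch points, so none lie outside $\bigcup_i D_i$. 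Applying Proposition \ref{degen} to the partition of $\mathrm{cr}\,g$ whose $i$-th class consists of the $m_i$ branch points in $D_i$, and taking $w_i$ as the marked interior point of $D_i$, yields a degeneration $f'$ of $g$ with critical values $w_i$ of multiplicity $m_i$. Hence $[f']\in\bar X_{\Sigma,n}$ and $LL[f']=x$.

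Finiteness of the fibres is the main obstacle and the step I would treat most carefully. Fix $x=\sum_i m_iw_i$ as above. Any $[f]\in LL^{-1}(x)$ restricts over $S^2\setminus\{w_1,\dots,w_p\}$ to an honest unbranched $n$-sheeted covering (all singular and ramification points of $f$ lie over the $w_i$), so it is encoded there by a monodromy homomorphism $\pi_1(S^2\setminus\{w_i\})\to S_n$ up to conjugacy; as $\pi_1$ is finitely generated and $S_n$ finite, only finitely many such homomorphisms occur. Over each $w_i$ the covering is recovered by gluing, to the boundary circles produced by the local monodromy, a possibly degenerate piece whose combinatorial type — the grouping of boundary circles into preimage points $v$, together with the local genera $g_v$ and branch numbers $m_v$ — is constrained by $\sum_{v\in f^{-1}(w_i)}(\mathrm{ind}_v-1)=m_i$. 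Since each summand is a nonnegative integer with $2g_v\le m_i$ and $|f^{-1}(w_i)|\le n$, only finitely many local types arise. Finally, by the uniqueness clause of Proposition \ref{pert} the branched covering of each disk filling in a prescribed boundary monodromy with prescribed genus is unique up to the appropriate equivalence, so the isomorphism class of $f$ is determined by the monodromy together with these finitely many local choices. Combining the two bounds shows $LL^{-1}(x)$ is finite, which completes the proof.
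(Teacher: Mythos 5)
Your proof is correct and follows essentially the paper's own route: continuity and openness are read off Lemma \ref{V} together with the base of Proposition \ref{base}, surjectivity comes from realizing a simple covering with branch points at prescribed positions (Riemann existence) and then degenerating via Proposition \ref{degen}, and finiteness is a count of finitely many combinatorial data --- the paper compresses this last step into one sentence (``degenerations of $\Sigma$ and collections of indices''), while your explicit count of monodromy homomorphisms $\pi_1(S^2\setminus\{w_i\})\to S_n$ plus local types over each $w_i$ is a faithful, and in fact more complete, expansion of it. One small nit: the uniqueness you need when regluing the degenerate local pieces (each branch of a wedge covers the disk as $z\mapsto z^k$, uniquely up to isomorphism) is really Natanzon's unique-extension fact cited inside the proof of Proposition \ref{pert}, not the uniqueness clause of Proposition \ref{pert} itself, which concerns perturbations with \emph{simple} critical values; for your totally ramified pieces the fact is elementary, so this is a mis-citation rather than a gap.
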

\begin{proof}
	An element $x\in P^k$ determines a set of points of the sphere.
	For any point $w$ of this set, we choose a closed disk $\bar{U}_w\subset S^2$ containing $w$ as an interior point.
	We choose these disks to be pairwise disjoint. In each disk $\bar{U}_w$ we pick $mult_w$
	distinct interior points where $mult_w$ is the multiplicity of $w$ in $x$.  By \cite[Proposition 1.2.15]{LZ} and Riemann-Hurwitz formula, there exists $[f]\in X_{\Sigma,n}$ whose critical values are the picked points. The disks  $\bar{U}_w$ define  a partition of the set of   critical values. Then the surjectivity of $LL$ follows from Proposition \ref{degen}.	
	
	By Proposition \ref{base}, the family of all perturbation domains of $ f $ determines a local base of neighbourhoods $ \{U_{[f],V}\}_V$ at the point $ [f] $. 
	By Lemma \ref{V}, the set $LL(U_{[f],V})=W$ is a neighbourhood of  $ LL[f] $.  
	It is clear that $\{ LL(U_{[f],V})\}_V$ is a local base  at the point  $ LL[f] $.
	Thus $ LL $ is continuous at any point $ [f]\in\bar{X}_{\Sigma,n}$ and is open.
	
The preimage of a fixed set of critical values under the map $ LL $ consists of the coverings whose isomorphism classes are determined by degenerations of $ \Sigma $ and by collections of indices of critical points.The number of these data is finite. So $ LL $ is finite.  
\end{proof}

\begin{te}
	\label{T2}
	The space $\bar{X}_{\Sigma,n} $ is Hausdorff.	
\end{te}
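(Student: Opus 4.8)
The plan is to separate points of $\bar{X}_{\Sigma,n}$ by means of the map $LL$ together with the structure of its fibres. The target $P^k$ is a quotient of the compact Hausdorff space $(S^2)^k$ by a finite group and is therefore Hausdorff, and by Proposition \ref{LL} the map $LL$ is continuous. Moreover, from the proof of Proposition \ref{LL} the family $\{LL(U_{[f],V})\}_V$ is a local base at $LL[f]$ in $P^k$, while $\{U_{[f],V}\}_V$ is a local base at $[f]$ in $\bar{X}_{\Sigma,n}$. So I would take two distinct classes $[f_1]\neq[f_2]$ and split the argument according to whether $LL[f_1]=LL[f_2]$.

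In the easy case $LL[f_1]\neq LL[f_2]$, I choose disjoint open sets $O_1\ni LL[f_1]$ and $O_2\ni LL[f_2]$ in $P^k$. Using the local bases above I pick perturbation domains $V_1,V_2$ with $LL(U_{[f_1],V_1})\subset O_1$ and $LL(U_{[f_2],V_2})\subset O_2$. If some $[h]$ lay in $U_{[f_1],V_1}\cap U_{[f_2],V_2}$, then $LL[h]$ would lie in $O_1\cap O_2=\emptyset$; hence these two basic open sets are disjoint neighbourhoods of $[f_1]$ and $[f_2]$.

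The substantial case is $LL[f_1]=LL[f_2]=x$, so both coverings have the same critical values with the same multiplicities. Here I choose pairwise disjoint closed disks $\bar U_w$ centred at the points $w$ of the support of $x$, forming a single perturbation domain $V=\bigcup_w\bar U_w$ that serves both $f_1$ and $f_2$, and I claim that $U_{[f_1],V}\cap U_{[f_2],V}=\emptyset$. Indeed, a class $[g]$ in this intersection would exhibit $g$ as a perturbation of both $f_1$ and $f_2$ with the same domain $V$; equivalently, each $f_i$ is a degeneration of $g$ over $V$. Since $x$ has exactly one support point $w$ in each disk, over $\bar U_w$ every $f_i$ has its only critical value at the centre $w$, so each $f_i$ is the \emph{maximal} degeneration of $g$ over $V$: over each disk it collapses every component $\tilde U_v$ of $g^{-1}(\bar U_w)$ to a wedge of $m_v$ disks, each branch mapping to $\bar U_w$ as the cover $z\mapsto z^{k_i}$ branched only over $w$, with the local genus fixed by the relation $\mu_v=b_1(\tilde U_v)+\sum_x\mu_x$ from the definition of perturbation.

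The point is that this collapsing operation is canonical and involves no choices: the unbranched part over $S^2\setminus\mathrm{Int}\,V$ is common to $f_1$ and $f_2$ because it is common to $g$; the combinatorial data $(m_v,k_1,\dots,k_{m_v},g_v)$ over each disk is read off from $g$; and a branched cover of a disk ramified only over its centre is determined up to isomorphism by its degree. Hence the maximal degeneration of $g$ over $V$ is unique up to isomorphism, forcing $[f_1]=[f_2]$, a contradiction. Thus $U_{[f_1],V}$ and $U_{[f_2],V}$ are disjoint neighbourhoods, and $\bar{X}_{\Sigma,n}$ is Hausdorff. I expect the main obstacle to be precisely this uniqueness of the degeneration over a fixed domain — checking that the local genus is forced and that the local covers are rigid — since it is what ultimately separates two distinct classes lying in a single fibre of $LL$.
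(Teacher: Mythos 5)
Your proof is correct and takes essentially the same route as the paper: the case $LL[f_1]\neq LL[f_2]$ is settled by continuity of $LL$, and the case $LL[f_1]=LL[f_2]$ by fixing a common perturbation domain $V$ and showing that any class in $U_{[f_1],V}\cap U_{[f_2],V}$ forces the local data $(m_v,k_1,\dots,k_{m_v},g_v)$ over each disk to coincide, whence $[f_1]=[f_2]$. Your ``maximal degeneration'' formulation is just a repackaging of the paper's component-by-component comparison of $f_1^{-1}(V)$ and $f_2^{-1}(V)$ through the components of $g^{-1}(V)$, using the same rigidity of a disk cover branched only over its centre.
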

\begin{proof}
	If $ LL[f_1]\neq LL[f_2] $ then the preimages of disjoint neighbourhoods of $ LL[f_1], LL[f_2] $ are disjoint neighbourhoods of $ [f_1], [f_2]$. 
	
	Suppose that $ LL[f_1]= LL[f_2] $, i.e. the sets of critical values (with their multiplicities) of
	$ f_1,f_2$ coincide. Let  $ V $ be  a common perturbation domain of $ f_1, f_2$. Choose an element $ [f']\in U_{[f_1],V}\cap U_{[f_2],V} $. Each connected component $\tilde{U}'$ of $f'^{-1}(V)$ gives a pair of homeomorphic components $ U_{v_1}\cong U_{v_2} $ of $f_1^{-1}(V)$ and $f_2^{-1}(V)$, where $ v_i $ is the center of the wedge of disks $ U_{v_i}$.
	Since  $ f $ is a common perturbation of $f_1$ and $f_2$, it follows from the definition of local perturbation that $\mu_{v_1}=\mu_{v_2}$.
	We have also $$f_1|_{\Sigma_1\setminus f_1^{-1}(\mathrm{Int}V)}=f'|_{\Sigma'\setminus f'^{-1}(\mathrm{Int}V)}=f_2|_{\Sigma_2\setminus f_2^{-1}(\mathrm{Int}V)},$$ where  $\Sigma_1,\Sigma_2,\Sigma'$ are the corresponding covering surfaces. Hence $f_1|_{\partial U_{v_1}}=f_2|_{\partial U_{v_2}}$. Thus the indices of the critical points $ v_1$ and $v_2 $ coincide, so that $f_1|_{U_{v_1}}=f_2|_{U_{v_2}}$ if we identify the surface  $ U_{v_1}$   with $ U_{v_2} $.  
	Hence $ [f_1]=[f_2] $. It proves that for $ [f_1]\neq[f_2] $, the neighbourhoods $ U_{[f_1],V} $ and $ U_{[f_2],V} $ are disjoint.
\end{proof}
\begin{sle}
	\label{W}
For	$ W=LL(U_{[f],V}) $  and $w=LL[f] $, the set $ U_{[f],V} $ is a connected component of $LL^{-1}(W)$and
we have $U_{[f],V} \cap LL^{-1}(w)=\{[f]\}$.  
\end{sle}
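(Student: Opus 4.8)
The plan is to prove the two assertions in the following order: first the fibre statement $U_{[f],V}\cap LL^{-1}(w)=\{[f]\}$, then a decomposition of $LL^{-1}(W)$ into disjoint open pieces $U_{[f'],V}$ (one for each $[f']\in LL^{-1}(w)$) which makes each of them, and in particular $U_{[f],V}$, clopen in $LL^{-1}(W)$, and finally connectedness of $U_{[f],V}$. Since a nonempty clopen connected subset is automatically a connected component, these steps together give the Corollary.

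For the fibre statement, observe first that the trivial perturbation (take each $\tilde U_v=U_v$) shows $[f]\in U_{[f],V}$, with $LL[f]=w$. Conversely, let $[g]\in U_{[f],V}$ satisfy $LL[g]=w$. Then $g$ is a perturbation of $f$ with domain $V$ whose critical values coincide with those of $f$, so $V$ is also a perturbation domain of $g$. Choosing a perturbation $f'$ of $g$ with all critical values simple and with a domain contained in $V$, the transitivity of perturbation (Remark \ref{trans}) shows $[f']\in U_{[f],V}\cap U_{[g],V}$. As $LL[f]=LL[g]$, the argument in the proof of Theorem \ref{T2} applies verbatim and yields $[f]=[g]$.

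For the decomposition, take any $[g]\in LL^{-1}(W)$. By the description of $W$ in Lemma \ref{V}, the critical values of $g$ lie in the interiors of the components $\bar U_w$ of $V$, with total multiplicity $\mu_w$ in each. Applying Proposition \ref{degen} to the partition of $\mathrm{cr}\,g$ by these components, placing the resulting critical value at the centre $w$ of each, produces a degeneration $f^*$ of $g$ with $LL[f^*]=w$; thus $[f^*]\in LL^{-1}(w)$ and $[g]\in U_{[f^*],V}$. Hence $LL^{-1}(W)=\bigcup_{[f']\in LL^{-1}(w)}U_{[f'],V}$, the union being finite since $LL$ is finite (Proposition \ref{LL}) and disjoint by the same Theorem \ref{T2} argument used above. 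Each summand is open by Proposition \ref{base}, so each is clopen in $LL^{-1}(W)$.

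Connectedness, which I expect to be the main obstacle, I would prove by showing that $U_{[f],V}$ is path-connected, joining an arbitrary $[g]\in U_{[f],V}$ to $[f]$. Choose an isotopy $\{\varphi_t\}_{t\in[0,1]}$ of $S^2$ supported in $V$ and fixing the centres $w$, such that $\varphi_t$ is a homeomorphism for $t<1$ and, as $t\to1$, drags every critical value of $g$ lying in $\bar U_w$ to the centre $w$. Then $g_t:=\varphi_t\circ g$ agrees with $f$ outside $V$ and has all critical values in $V$, so $[g_t]\in U_{[f],V}$ and $t\mapsto[g_t]$ is continuous on $[0,1)$; as $t\to1$ the critical values enter every neighbourhood of $w$, so $[g_t]$ converges in $LL^{-1}(W)$, and since $U_{[f],V}$ is closed there the limit lies in $U_{[f],V}\cap LL^{-1}(w)=\{[f]\}$. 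This produces a path from $[g]$ to $[f]$ inside $U_{[f],V}$. The delicate points are the continuity of $t\mapsto[g_t]$ in the topology of Proposition \ref{base} (resting on the fact that a small displacement of critical values is itself a perturbation) and the behaviour of the collapsing isotopy near $t=1$; these are where the argument must be carried out with care.
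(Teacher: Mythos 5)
Your first two steps are exactly the paper's proof. The paper obtains $LL^{-1}(W)=\bigcup_{[f']\in LL^{-1}(w)}U_{[f'],V}$ from Proposition \ref{degen} (one inclusion) and Lemma \ref{V} (the other), takes pairwise disjointness of the members from the proof of Theorem \ref{T2}, and notes they are open; your fibre statement is then an instance of that disjointness (applied to $U_{[f],V}$ and $U_{[g],V}$ for $[g]\in U_{[f],V}\cap LL^{-1}(w)$), just as you argue. These parts of your proposal are correct, and note in passing that finiteness of $LL$ is not needed for clopen-ness: in any disjoint open family each member's complement within the union is open.

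Where you diverge is the connectedness of $U_{[f],V}$. Here you have actually spotted something the paper elides: its proof concludes ``being open sets they are connected components'' directly from the disjoint open decomposition, which strictly speaking only shows each $U_{[f'],V}$ is a \emph{union} of components; connectedness of each piece is silently assumed. Your instinct to prove it is sound, but your sketch has a genuine gap exactly where you flag it. At $t=1$ the map $\varphi_1$ cannot be a homeomorphism (it would have to merge distinct critical values), so $g_1=\varphi_1\circ g$ is undefined, and the claim that ``$[g_t]$ converges in $LL^{-1}(W)$'' because the critical values converge is unjustified: convergence of $LL[g_t]$ in $P^k$ does not by itself produce a convergent lift, and no compactness of $\bar{X}_{\Sigma,n}$ is available to extract a limit. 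The repair is to name the limit in advance: let $[f^*]$ be the degeneration of $g$ given by Proposition \ref{degen} with one critical value at each centre; then $[g]\in U_{[f^*],V}\cap U_{[f],V}$ with $[f^*],[f]\in LL^{-1}(w)$, so your own disjointness step forces $[f^*]=[f]$, and one then verifies directly that $[g_t]\in U_{[f],V_1}$ for every perturbation domain $V_1\subset V$ of $f$ once $t$ is close to $1$, using the same small-displacement-is-a-perturbation observation your continuity claim for $t<1$ rests on (which is correct: a post-composition with a homeomorphism supported in small disks around the critical values is a perturbation in the sense of Section \ref{Deform}). Alternatively, one can avoid collapsing isotopies altogether: by Proposition \ref{nspert} any two classes in $C=U_{[f],V}\cap H(\Sigma,n)$ are $(V,\mathrm{cr}f)$-equivalent, and the isotopies occurring in that equivalence yield continuous paths inside $U_{[f],V}$, so $C$ is connected; since every basic neighbourhood $U_{[g],V'}$, $V'\subset V$, of any $[g]\in U_{[f],V}$ meets $H(\Sigma,n)$ (Proposition \ref{nspert}) and lies in $U_{[f],V}$ (Remark \ref{trans}), we get $C\subset U_{[f],V}\subset\overline{C}$, and any set squeezed between a connected set and its closure is connected.
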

\begin{proof}
	By Proposition \ref{degen}, we have  $LL^{-1}(W)\subset \bigcup_{[f']\in LL^{-1}(w)}U_{[f'],V}$. The  inverse inclusion follows from Lemma \ref{V}. The members of the above union are pairwise disjoint
	(see the proof of Theorem \ref{T2}).  Thus, being open sets they are connected components of $LL^{-1}(W)$.
\end{proof}
\section{Moduli space of rational functions}
\label{rationf}
Let $F_n$ be the space of complex rational functions $f:\Cb\mathrm{P}^1\rightarrow \Cb\mathrm{P}^1$ of degree $n$,
i.~e., rational fractions $f=a(x)/b(x)$, where $a$ and $b$ are coprime homogeneous polynomials
of degree $n$. We endow $F_n$ with the topology defined by the coefficients of the polynomials, thus
we consider $F_n$ as a subspace of $\Cb\mathrm{P}^{2n+1}$.

We consider the following (right) action of group $G=PGL(2,\Cb)$ on
$F_n$. If $x=(x_0,x_1)$ is a row, then for $f=a(x)/b(x)$ and $g\in G$ we define the rational function $f^g(x)$
as $a(xg)/b(xg)$. Let $\mathcal{X}_n$ be the quotient space $F_n/G$ and let $pr:F_n\rightarrow\mathcal{X}_n$
be the quotient mapping.

Since the action of $G$ on $F_n$ is continuous, $pr$ is an \emph{open mapping}.

\begin{lem}
	\label{sequence}
	$\mathcal X_n$ is a first-countable space.
	
	Let $l$ be an arbitrary point of $\mathcal X_n$.
	Any sequence in $\mathcal X_n$ convergent to $l$ has a subsequence which is the image of
	a sequence in $F_n$ convergent to some element of $pr^{-1}(l)$.	
\end{lem}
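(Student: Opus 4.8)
The plan is to reduce both assertions to two structural facts: that $F_n$, being a subspace of the compact metric space $\Cb\mathrm{P}^{2n+1}$, is second countable and metrizable; and that $pr$ is a continuous open surjection (openness is already recorded just before the statement). Neither part will require any computation with the coefficients of $a,b$ or with the $G$-action; everything follows from these two facts by point-set topology.

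For first countability I would show that an open continuous surjection carries a countable base to a countable base, which already gives second countability (hence first countability) of $\mathcal X_n$. Concretely, fix a countable base $\mathcal B$ of $F_n$. Since $pr$ is open, every $pr(B)$ with $B\in\mathcal B$ is open; and given an open $U\ni l$ in $\mathcal X_n$, I would pick $f\in pr^{-1}(l)\subseteq pr^{-1}(U)$, choose $B\in\mathcal B$ with $f\in B\subseteq pr^{-1}(U)$, and note that $l=pr(f)\in pr(B)\subseteq pr(pr^{-1}(U))=U$, the last equality by surjectivity. Thus $\{pr(B)\}_{B\in\mathcal B}$ is a countable base and $\mathcal X_n$ is second countable, in particular first countable.

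For the second assertion the idea is to use openness to lift, choosing lifts that cluster at a prescribed preimage of $l$. Fix any $f\in pr^{-1}(l)$ and a decreasing countable neighbourhood base $N_1\supseteq N_2\supseteq\cdots$ of $f$ in $F_n$ (available by metrizability). Each $pr(N_m)$ is then an open neighbourhood of $l$, so from $l_k\to l$ I obtain indices $K_1<K_2<\cdots$ with $l_k\in pr(N_m)$ for all $k\geq K_m$. For $k\geq K_1$ let $m(k)$ be the largest $m$ with $K_m\leq k$; since $l_k\in pr(N_{m(k)})$ I may pick $f_k\in N_{m(k)}$ with $pr(f_k)=l_k$. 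As $k\to\infty$ one has $m(k)\to\infty$, so $f_k$ eventually lies in every $N_m$, i.e. $f_k\to f\in pr^{-1}(l)$. Hence the whole tail of $(l_k)$ — in particular a subsequence — is the image of the convergent sequence $(f_k)$, as required.

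The main obstacle here is conceptual rather than computational: because $G=PGL(2,\Cb)$ and $F_n$ itself are noncompact, a naive lift followed by extraction of a convergent subsequence inside the ambient compact space $\Cb\mathrm{P}^{2n+1}$ may produce a limit lying on the resultant hypersurface $\Cb\mathrm{P}^{2n+1}\setminus F_n$ (a degenerate pair $(a,b)$ with a common root), hence not a genuine element of $pr^{-1}(l)$ at all. The role of the openness of $pr$ is precisely to circumvent this: by forcing the lifts $f_k$ into shrinking neighbourhoods of the fixed honest rational function $f\in F_n$, one prevents them from drifting toward that boundary, and their limit is automatically the chosen $f$. I would therefore present the openness-based selection above rather than a compactness argument, invoking compactness of $\Cb\mathrm{P}^{2n+1}$ only implicitly, through the metrizability of $F_n$.
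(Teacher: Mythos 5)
Your proof is correct and takes essentially the same route as the paper: both parts rest on the openness of $pr$, pushing countable (local) bases of $F_n$ forward to get first countability, and lifting the convergent sequence by choosing preimages $f_k\in pr^{-1}(l_k)$ inside shrinking neighbourhoods of a fixed $f\in pr^{-1}(l)$, exactly as the paper's choice $f_{m_U}\in pr^{-1}(l_{m_U})\cap U$. Your version is marginally stronger in two harmless ways (second countability instead of first, and lifting a whole tail, which also handles the index-monotonicity point the paper's ``minimal index'' selection leaves implicit), but the underlying mechanism is identical.
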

\begin{proof}
	The mapping $pr$ is open, hence it maps any countable local base at a point
	$f\in F_n$ to a countable local base at $pr(f)$. We fix such two bases.
	
	Let $l_m\rightarrow l$ be a convergent sequence in $\mathcal{X}_n$ and let $f$ be a point in $pr^{-1}(l)$.
	For any element $U$ of the chosen countable local base  at $f$, we choose a member $l_{m_U}$ of the sequence $(l_m)$
	such that its number $n_U$ is minimal under the condition that $l_{m_U}\in pr(U)$. Then we choose
	$f_{m_U}\in pr^{-1}(l_{m_U})\cap U$. It is clear that $f_{m_U}\rightarrow f$.
\end{proof}

\begin{lem}
	\label{!lim}
	Any convergent sequence in $\mathcal{X}_n$ has a unique limit.
\end{lem}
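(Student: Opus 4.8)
The plan is to use Lemma \ref{sequence} to transport the question into $F_n$, and then to show that two convergent sequences in $F_n$ lying in one and the same $G$-orbit have $G$-equivalent limits; the one real difficulty will be the non-compactness of $G=PGL(2,\Cb)$, which I will resolve using the coprimality that defines $F_n$.

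Concretely, suppose $l_m\to l$ and $l_m\to l'$; I must show $l=l'$. First I would apply the second assertion of Lemma \ref{sequence} to $l_m\to l$ and pass to a subsequence so that $l_m=pr(f_m)$ with $f_m\to f\in pr^{-1}(l)$. This subsequence still converges to $l'$, so a second application of Lemma \ref{sequence} yields a further subsequence with $l_m=pr(g_m)$, $g_m\to g\in pr^{-1}(l')$. For each $m$ we then have $pr(f_m)=l_m=pr(g_m)$, so there is $h_m\in G$ with $g_m=f_m^{h_m}$. It now suffices to prove that $f$ and $g$ lie in a common $G$-orbit, since then $l=pr(f)=pr(g)=l'$.

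Next I would pass to coordinates. Choose coprime homogeneous representatives $f_m=a_m/b_m$, $f=a/b$ with coefficient vectors $(a_m,b_m)\to(a,b)$ in $\Cb^{2n+2}$, and lift each $h_m$ to a matrix in $GL(2,\Cb)$ normalized to unit norm. By compactness of the unit sphere of matrices, after a subsequence $h_m\to h_\infty$ with $\|h_\infty\|=1$. If $\det h_\infty\neq0$, then $h_\infty$ represents an element of $G$, $h_m\to h_\infty$ in $G$, and continuity of the action gives $g=\lim f_m^{h_m}=f^{h_\infty}$; hence $l=l'$. The hard part is exactly that $G$ is not compact, so in principle $\det h_\infty=0$ could occur, and this case must be excluded.

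To exclude it, note that if $\det h_\infty=0$ then $h_\infty$ has rank $1$, so $xh_\infty=\lambda(x)\,v$ for a nonzero linear form $\lambda$ and a fixed nonzero $v\in\Cb^2$. Since $a\mapsto a\circ h$ depends continuously on the coefficients and on $h$, and homogeneity gives $a(\lambda(x)v)=\lambda(x)^n a(v)$, the coordinate pairs converge:
\[
(a_m\circ h_m,\ b_m\circ h_m)\ \longrightarrow\ \bigl(\lambda^{n}a(v),\ \lambda^{n}b(v)\bigr).
\]
As $a,b$ are coprime they have no common zero, so $(a(v),b(v))\neq(0,0)$ and the limiting pair is nonzero with both entries divisible by $\lambda$; thus its class in $\Cb\mathrm{P}^{2n+1}$ is not coprime and lies outside $F_n$. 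But $[a_m\circ h_m:b_m\circ h_m]=g_m\to g\in F_n$, and limits in the Hausdorff space $\Cb\mathrm{P}^{2n+1}$ are unique --- a contradiction. Hence $\det h_\infty\neq0$ and $l=l'$, completing the proof.
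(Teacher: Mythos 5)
Your proof is correct, but it takes a genuinely different route from the paper's. Both arguments begin the same way: two applications of Lemma \ref{sequence} (plus passing to a common subsequence) produce lifted sequences $f_m\to f\in pr^{-1}(l)$ and $f_m^*\to f^*\in pr^{-1}(l^*)$ with $f_m^{g_m}=f_m^*$ for some $g_m\in G$, and the whole difficulty is to control the sequence $(g_m)$ in the non-compact group $G=PGL(2,\Cb)$. The paper does this geometrically: it marks three points --- a root of $a$, a root of $b$, and a root of $a+b$, which are distinct precisely because $a$ and $b$ are coprime --- follows them along the sequence by continuity of roots, and uses the fact that an element of $PGL(2,\Cb)$ is determined by, and depends continuously on, the images of three distinct points; this forces $g_m\to g$ in $G$ and yields $f^g=f^*$ directly. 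You instead argue linearly: you normalize matrix lifts of the transporters to the unit sphere of $2\times 2$ matrices, extract a convergent subsequence $h_m\to h_\infty\neq 0$, and exclude $\det h_\infty=0$ by showing that a rank-one limit would force the coefficient pairs $(a_m\circ h_m,\,b_m\circ h_m)$ to converge to $\bigl(\lambda^n a(v),\lambda^n b(v)\bigr)$, a \emph{nonzero} point of $\Cb^{2n+2}$ (here coprimality enters, via $(a(v),b(v))\neq(0,0)$) whose projective class lies outside $F_n$, contradicting uniqueness of limits in the Hausdorff space $\Cb\mathrm{P}^{2n+1}$. So coprimality is the engine in both proofs but in different roles: for the paper it supplies three distinct marked points; for you it makes the degenerate limit visible and manifestly non-coprime. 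Your argument is essentially the hands-on properness/separatedness argument that underlies Mumford's stability criterion --- exactly the alternative the paper itself points to in the remark after Theorem \ref{Hausd} --- and it generalizes readily (e.g.\ to $PGL(m)$ acting on pairs of coprime forms), whereas the paper's argument is more elementary and exploits the sharp three-point transitivity special to $PGL(2,\Cb)$ acting on $\Cb\mathrm{P}^1$. Two cosmetic points, neither a gap: to choose lifts with $(a_m,b_m)\to(a,b)$ you should either normalize in an affine chart of $\Cb\mathrm{P}^{2n+1}$ around $f$ or pass to a further subsequence of unit-norm lifts; and in the degenerate case, when $a(v)=0$ or $b(v)=0$, one entry of the limiting pair is the zero polynomial rather than a nonzero multiple of $\lambda^n$, but such a pair still lies outside $F_n$, so your contradiction stands in all cases.
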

\begin{proof}
	Let $(z_m)$ be a convergent sequence in $\mathcal{X}_n$ and let $l$, $l^*$ be its limits.
	Any  subsequence of $(z_m)$ has the same limits. Hence, by applying twice Lemma \ref{sequence}, we obtain two
	convergent sequences
	$f_m=a_m/b_m\rightarrow f=a/b$ and $f_m^*=a_m^*/b_m^*\rightarrow f^*=a^*/b^*$ in $F_n$ such that
	$pr(f_m)=pr(f_m^*)=z_m$, $pr(f)=l$, and $pr(f^*)=l^*$.
	The condition $pr(f_m)=pr(f_m^*)$ means that there exist elements
	$g_m\in G$ such that $f_m^{g_m}=f_m^*$.	
	
	Let us choose a root $\alpha$ of $a$.
	Since $a_m\to a$, we can choose a root $\alpha_m$ of each $a_m$ so that $\alpha_m\to\alpha$.
	Similarly we choose sequences of roots $\beta_m$ and $\gamma_m$ 
	of $b_m$ and $a_m+b_m$ convergent to some roots $\beta$ and $\gamma$ of $b_m$ and $a_m+b_m$
	respectively.
	We set $\alpha^*_m=\alpha_m g_m^{-1}$, $\beta^*=\beta_m g_m^{-1}$, and $\gamma^*=\gamma_m g_m^{-1}$.	
	Since these are roots of $a^*_m$, $b^*_m$, and $a^*_m+b^*_m$ respectively,
	we may assume (maybe, after passing to a subsequence of $(f_m)$) that the sequences
	$(\alpha^*_m)$, $(\beta^*_m)$, and $(\gamma^*_m)$ converge to some roots
	$\alpha^*$, $\beta^*$, and $\gamma^*$ of  $a^*$, $b^*$, and $a^*+b^*$ respectively.
	Taking into account that the polynomial $a$ and $b$ (resp. $a^*$ and $b^*$) are coprime, the points $\alpha$, $\beta$, and
	$\gamma$ (resp. $\alpha^*$, $\beta^*$, and $\gamma^*$) are distinct. Hence
	there exists a unique element $g\in G$ such that $\alpha g=\alpha^*$, $\beta g=\beta^*$, 
	and $\gamma g=\gamma^*$.
	
	Since $a_m$ and $b_m$ are coprime, the points $\alpha_m$, $\beta_m$, $\gamma_m$ are distinct for any $m$,
	thus $g_m$ is uniquely defined by the pair of triples $(\alpha_m,\beta_m,\gamma_m)$ and
	$(\alpha^*_m,\beta^*_m,\gamma^*_m)$. They converge to the pair of triples
	$(\alpha,\beta,\gamma)$ and $(\alpha^*,\beta^*,\gamma^*)$, which uniquely defines an element $g$.	
	Therefore $g_m\to g$. Passing to the limit in the equality $f_m^{g_m}=f_m^*$, we obtain
	$f^g=f^*$. Thus $l=l^*$.
	
\end{proof}
\begin{te}
	\label{Hausd}
	The space   $\mathcal{X}_n$ is Hausdorff.
\end{te}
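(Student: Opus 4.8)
The plan is to deduce the Hausdorff property directly from the two preceding lemmas, using the standard fact that in a first-countable space uniqueness of sequential limits already forces the separation of distinct points. I would argue by contradiction, so I first suppose that $\mathcal{X}_n$ is not Hausdorff. Then there are two distinct points $l, l^*\in\mathcal{X}_n$ that cannot be separated by open sets, meaning that every neighbourhood of $l$ meets every neighbourhood of $l^*$.

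By Lemma \ref{sequence} the space $\mathcal{X}_n$ is first-countable, so I may fix a countable local base at $l$ and a countable local base at $l^*$. Replacing each base by the family of finite intersections of its members, I may assume that both are decreasing: $W_1\supset W_2\supset\cdots$ at $l$ and $W_1^*\supset W_2^*\supset\cdots$ at $l^*$. Since $l$ and $l^*$ cannot be separated, each $W_k\cap W_k^*$ is nonempty, and I would choose a point $z_k\in W_k\cap W_k^*$ for every $k$.

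Because $\{W_k\}$ is a decreasing local base at $l$, every neighbourhood of $l$ contains some $W_k$ and hence all the terms $z_k, z_{k+1},\dots$; thus $z_k\to l$. For exactly the same reason $z_k\to l^*$. So $(z_k)$ is a convergent sequence in $\mathcal{X}_n$ possessing the two distinct limits $l$ and $l^*$, which contradicts Lemma \ref{!lim}. Therefore $l=l^*$, and $\mathcal{X}_n$ is Hausdorff.

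I do not expect a genuine obstacle in this argument, as all the substance is carried by the two lemmas: Lemma \ref{!lim} supplies uniqueness of sequential limits, while Lemma \ref{sequence} supplies the first-countability without which uniqueness of sequential limits would not suffice to guarantee the Hausdorff property. The only point deserving care is to make the chosen local bases decreasing, so that membership $z_k\in W_k$ genuinely yields convergence $z_k\to l$ (and $z_k\to l^*$); this is routine and uses nothing beyond the definition of a local base.
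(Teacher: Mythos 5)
Your proof is correct and follows essentially the same route as the paper: the paper's proof of Theorem \ref{Hausd} simply invokes (as ``easy to check'') the standard fact that a first-countable space with unique sequential limits is Hausdorff, citing Lemmas \ref{sequence} and \ref{!lim}, and your argument is precisely the routine verification of that fact, including the correct care taken to make the local bases decreasing.
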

\begin{proof}
	It is easy to check that a topological space is Hausdorff if it is first-countable
	and admits unique limits of convergent sequences.
\end{proof}

Note that Theorem \ref{Hausd} easily follows from Mumford's theorem about stable points
(see, for example, \cite[Part II, Section 4.6, Theorem 4.16]{VP};
one can adapt the same arguments as in  Example $1^\circ$ in Section 4.6 of this book:
it is enough just to replace
the space of binary forms $PV_{2g+2}$ and the discriminant by $F_n$ and the resultant of $a$ and $b$, respectively).
Mumford's theorem implies, moreover, that $\mathcal{X}_n$ is an algebraic variety.

\subsection{Lyashko-Looijenga map}
$\mathcal{LL}$ takes a non-constant rational function $f:\Cb\mathrm{P}^1\rightarrow \Cb\mathrm{P}^1$ to the
monic polynomial $(t-t_1)^{l_1}\ldots(t-t_r)^{l_r}$
of an indeterminate $t$ whose roots are critical values of $f$ and the multiplicity $l_i$ of a root $t_i$
is equal to $\sum_{x\in f^{-1}(t_i)}(\deg_xf-1)$,  where $\deg_xf$ is  the
multiplicity of $ x $ as the root of the equation  $f(x)=t_i$ (cf.
\ref{Deform}). 

According to \cite[Lemma 4.3]{L}, the image of a rational function
$f=P(x)/Q(x)$ under $\mathcal{LL}$ is the discriminant of $P(x)- tQ(x)$ up to constant factor.

We shall need the following homogeneous analog of Lyashko-Looijenga map
$$
\mathcal{LL}[P(x):Q(x)]=\mathrm{discr}_x(t_1Q(x)-t_0P(x)).
$$
Recall that the discriminant of a homogeneous polynomial $F(x_0,x_1)$ of degree $d$
can be defined as the resultant of $F'_{x_0}(x_0,1)$ and $F'_{x_1}(x_0,1)$.
It is a homogeneous polynomial of degree $2d-2$ in the coefficients of $F$ (see, e.~g., \cite[\S 1]{Cool}).

It is clear that the mapping $\mathcal{LL}$ is invariant under the action of $G$,
hence  it factors through the quotient: $\overline{\mathcal{LL}}\circ~pr=\mathcal{LL}$.

\begin{za}
	\label{OpenLL}
	Since the mapping $\mathcal{LL}$ is polynomial and $pr$ is open, the mapping
	$\overline{\mathcal{LL}}$ is continuous and open.
\end{za}

\subsection{Relation with branched coverings}
Let $X_n=X_{S^2,n}$, where we identify the target (covered) sphere with $\Cb\mathrm{P}^1$. Let
$c:X_n\rightarrow\mathcal{X}_n$ be the mapping which takes $[f]$ to the orbit
$[f^c]=Gf^c$ of a rational function $f^c$ obtained by pulling back the complex structure from
$\Cb\mathrm{P}^1$ to the covering sphere $S^2$.

\begin{te}
	\label{Topol}
	The mapping $c$	is a homeomorphism.
\end{te}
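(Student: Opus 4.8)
The plan is to show that $c$ is a continuous open bijection, organizing everything around the commutative triangle relating the two Lyashko--Looijenga maps $LL$ and $\overline{\mathcal{LL}}$ to the symmetric power $P^k$, where $k=2n-2$. First I would record that $c$ is a well-defined bijection. Pulling back the standard complex structure of $\Cb\mathrm{P}^1$ along a branched covering $f\colon S^2\to\Cb\mathrm{P}^1$ turns the source into a genus-$0$ Riemann surface, hence into $\Cb\mathrm{P}^1$, uniquely up to the choice of a biholomorphism, i.e. up to $G=PGL(2,\Cb)$; this gives a well-defined orbit $[f^c]$, and replacing $f$ by an isomorphic covering changes $f^c$ only within its $G$-orbit. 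A coincidence $c[f_1]=c[f_2]$ means that $f_1^c$ and $f_2^c$ differ by an element of $G$, i.e. by a biholomorphism intertwining the two coverings, whence $[f_1]=[f_2]$; surjectivity is immediate since a rational function is itself an orientation preserving branched covering. Next I would identify $P^k$ with the space $\mathbb{P}^{2n-2}=\mathrm{Sym}^{2n-2}(\Cb\mathrm{P}^1)$ of binary forms of degree $2n-2$ up to scalar and check the identity $\overline{\mathcal{LL}}\circ c=LL$: by the definition of the multiplicity of a critical value (Section \ref{Deform}) and the description of $\mathcal{LL}$ as the discriminant of $t_1Q-t_0P$, both sides record the critical values with equal multiplicities. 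Since $LL$ is finite (Proposition \ref{LL}) and $c$ is a bijection, this identity also shows that $\overline{\mathcal{LL}}$ has finite fibres.

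To prove that $c^{-1}$ is continuous (equivalently, that $c$ is open) I would argue sequentially, which is legitimate because both spaces are first countable and Hausdorff (Lemma \ref{sequence} and Theorem \ref{Hausd} on one side; the base $\{U_{[f],V}\}$, with $V$ ranging over a countable cofinal family of perturbation domains, and Theorem \ref{T2} on the other). Let $l_m\to l$ in $\mathcal X_n$. By Lemma \ref{sequence} some subsequence lifts to a convergent sequence $f_m\to f$ in $F_n$ with $pr(f_m)=l_m$, $pr(f)=l$, so $c^{-1}(l_m)=[f_m]$ and $c^{-1}(l)=[f]$. Coefficientwise convergence makes the discriminants converge, hence the critical values of $f_m$ converge to those of $f$ with multiplicities; thus, given a perturbation domain $V$ of $f$, for large $m$ all critical values of $f_m$ lie in $V$ with the correct multiplicity in each component of $V$, and $f_m$ agrees with $f$ up to isotopy over $S^2\setminus\mathrm{Int}\,V$ because the two unbranched coverings there have the same monodromy. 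The uniqueness statement of Proposition \ref{pert} then identifies $f_m$ with a perturbation of $f$ of domain $V$, i.e. $[f_m]\in U_{[f],V}$, so $[f_m]\to[f]$. Since every subsequence of $(l_m)$ admits such a sub-subsequence and $X_n$ has unique limits, $c^{-1}(l_m)\to c^{-1}(l)$ for the whole sequence.

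To prove that $c$ is continuous (equivalently, that $c^{-1}$ is open) I would first upgrade Remark \ref{OpenLL} to a local description of $\overline{\mathcal{LL}}$ parallel to Corollary \ref{W}: being continuous, open and finite onto a locally connected Hausdorff space, over a small connected neighbourhood $W$ of $w=\overline{\mathcal{LL}}(l')$ its preimage splits into the connected components $O_{l'}$, one for each point $l'$ of the fibre, with $\overline{\mathcal{LL}}(O_{l'})=W$ and $O_{l'}\cap\overline{\mathcal{LL}}^{-1}(w)=\{l'\}$. Choosing $V$ with $LL(U_{[f],V})=W$ and using $\overline{\mathcal{LL}}\circ c=LL$, the set $c^{-1}(O_{l'})$ is connected (by the continuity of $c^{-1}$ just proved) and contained in $LL^{-1}(W)=\bigsqcup_{[f'']}U_{[f''],V}$ from Corollary \ref{W}; since it contains $c^{-1}(l')$, which lies over $w$, it lies in the single component $U_{[c^{-1}(l')],V}$. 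As $c^{-1}$ is a bijection carrying the partition $\bigsqcup_{l'}O_{l'}$ onto the partition $\bigsqcup_{[f'']}U_{[f''],V}$, these inclusions are forced to be equalities, so $c^{-1}(O_{l'})=U_{[c^{-1}(l')],V}$ is open. Hence $c^{-1}$ is open on a base, $c$ is continuous, and a continuous open bijection is a homeomorphism.

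The main obstacle is the geometric heart of the openness of $c$ in the third paragraph: turning coefficientwise convergence $f_m\to f$ into the statement that $f_m$ is genuinely a perturbation of $f$ with domain $V$. This requires controlling the covering over $S^2\setminus\mathrm{Int}\,V$ --- that the monodromy stabilizes and that the resulting covering of the complement is isotopic rel boundary to that of $f$ --- and then invoking the uniqueness in Proposition \ref{pert} to recognize the class $[f_m]$ inside $U_{[f],V}$. A secondary technical point is the local normal form for $\overline{\mathcal{LL}}$ used at the start of the fourth paragraph, which I expect to follow from its finiteness and openness together with the fact that $\mathcal X_n$ is a locally connected Hausdorff variety, by the same reasoning that produced Corollary \ref{W} for $LL$.
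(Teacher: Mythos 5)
Your proposal is correct in outline, but it takes a genuinely different (and heavier) route than the paper. The paper's proof contains no sequential argument at all: it records that $c$ is a bijection and that $\overline{\mathcal{LL}}\circ c=LL$, so that $\overline{\mathcal{LL}}$ inherits finiteness from $LL$ (Proposition \ref{LL}); then, from finiteness plus Hausdorffness of $\mathcal{X}_n$, it takes a neighbourhood $W$ of $\overline{\mathcal{LL}}[f^c]$ whose preimage component $U$ through $[f^c]$ is a neighbourhood, lifts through $\overline{\mathcal{LL}}|_U$ a local base at $\overline{\mathcal{LL}}[f^c]$ consisting of sets $LL(U_{[f],V})\subset W$ (Lemma \ref{V}), and concludes via Corollary \ref{W} that $\{c(U_{[f],V})\}_V$ is a local base at $[f^c]$; this single base-matching yields continuity and openness of $c$ simultaneously. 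Your fourth paragraph is essentially that argument. Your second paragraph --- sequential continuity of $c^{-1}$ via Lemma \ref{sequence}, convergence of the discriminant roots, and recognition of a coefficientwise-close rational function as a perturbation --- is additional machinery that the paper circumvents entirely. What your route buys is that it makes explicit the geometric content hidden in the paper's terse lifting step (why nearby functions land in $U_{[f],V}$), at the price of an analytic lemma: coefficientwise convergence with coprime limit gives uniform convergence $f_m\to f$ on $\Cb\mathrm{P}^1$, and uniformly close unbranched coverings of the compact set $S^2\setminus\mathrm{Int}\,V$ are isomorphic over the identity (canonical nearby lifts through the covering $f$). Note also that the ``local normal form'' for $\overline{\mathcal{LL}}$ which you postpone is exactly the point-set claim the paper itself leans on, so neither text is more rigorous there; but your phrase ``by the same reasoning that produced Corollary \ref{W}'' will not literally work, since that corollary's proof is specific to $LL$ (it rests on Propositions \ref{degen} and \ref{nspert}); for $\overline{\mathcal{LL}}$ you need the general statement for finite, open, continuous maps onto a Hausdorff, first-countable, locally connected space, proved separately.

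Two concrete repairs. First, Proposition \ref{pert} is the wrong tool for placing $[f_m]$ in $U_{[f],V}$: its uniqueness clause concerns perturbations with \emph{simple} critical values, and your $f_m$ need not have simple critical values. What you actually need is only the definition of perturbation plus a count: once $f_m$ is identified with $f$ over $S^2\setminus\mathrm{Int}\,V$, every component of $f_m^{-1}(\bar{U}_w)$ is planar (it sits in $S^2$), and if the components over $\bar{U}_w$ have boundary-circle counts $r_j$ and degrees $d_j$, Riemann--Hurwitz gives total ramification $\sum_j(d_j-2+r_j)$; equating this with the ramification of $f$ over $\bar{U}_w$ (all of whose preimage components are one-boundary disks), which is forced by the convergence of critical values with multiplicities, yields $r_j=1$ for all $j$, so each component is a disk and no boundary circles merge --- exhibiting $f_m$, up to isomorphism, as a covering equal to $f$ outside $\mathrm{Int}\,V$, i.e.\ a perturbation with domain $V$. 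Second, make the normal-form lemma for $\overline{\mathcal{LL}}$ an explicit standalone statement as just described; with these two patches your argument goes through, though it remains longer than the paper's direct base-matching.
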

\begin{proof}
	It is well known (see, e.~g., \cite[\S 1.8]{LZ}) that $c$ is a bijection. To prove that it is
	a homeomorphism, it is enough to show that $c$ maps a local base  at any point $[f]\in X_n$
	to a local base  at $[f^c]\in\mathcal{X}_n$.
	We identify $P^{2n-2}$ (recall that $P^k$ is the $k$-th symmetric power of $S^2$)
	with the space of homogeneous polynomials of degree $2n-2$.
	It is clear that $\overline{\mathcal{LL}}\circ c=LL$. Therefore $\overline{\mathcal{LL}}$ is of finite covering degree
	because $LL$ is so
	(see \ref{LL}). Since $\overline{\mathcal{LL}}$ is finite and $\mathcal{X}_n$ is Hausdorff,
	there exists a neighbourhood $W$ of  $\overline{\mathcal{LL}}[f^c]$ such that the connected
	component  $U$ of $\overline{\mathcal{LL}}^{-1}(W)$ is a neighbourhood of  $[f^c]$.
	Since $\overline{\mathcal{LL}}$ is open and continuous, $\overline{\mathcal{LL}}|_U^{-1}$
	lifts any local base  at $\overline{\mathcal{LL}}[f^c]$ lying in $W$ to a local base 
	at $[f^c]$. By Lemma \ref{V}, we can choose $LL(U_{[f],V})\subset W$ as the neighbourhoods of the lifted base.
	By combining this facts with Corollary \ref{W}, we deduce that
	$\{c(U_{[f],V})\}_V$ is a local base  at $[f^c]$.
\end{proof}

\section{Natanzon--Turaev compactification}
\label{NTc}
Let $\Sigma$ be a closed oriented surface and let $H(\Sigma,n)\subset X_{\Sigma,n}$ be the space of
isomorphism classes of orientation preserving $n$-fold branched coverings $f:\Sigma\rightarrow S^2$
with simple critical values.
Here we describe the compactification $N(\Sigma,n)$ of $H(\Sigma,n)$ constructed by Natanzon and
Turaev \cite{NT}.

A \emph{decorated function} on $\Sigma$ is defined as a triple $(f,E,\{D_e\}_{e\in E})$, where
$f\in H(\Sigma,n)$, $E$ is a finite subset of $S^2$ which does not contain critical values of $f$,
and $\{D_e\}_{e\in E}$ is a collection of pairwise disjoint disks in $S^2$ such that for any $e\in E$:
\begin{itemize}
	\item the interior of $D_e$ contains $e$ and at least two critical values of $f$;
	\item the boundary of $D_e$ does not contain critical values of $f$.
\end{itemize}
The \emph{multiplicity} of $e\in E$ is defined as the number of critical values of $f$ belonging to $D_e$.

Two decorated functions $(f,E,\{D_e\}_{e\in E})$ and $(f',E',\{D'_e\}_{e\in E'})$ are called
\emph{equivalent} if $E=E'$ and there exist homeomorphisms
$\alpha:\Sigma\rightarrow\Sigma$ and $\varphi:S^2\rightarrow S^2$ such that
\renewcommand{\labelenumi}{(\theenumi)}
\renewcommand{\theenumi}{\roman{enumi}}
\begin{enumerate}
	\item $\varphi\circ f=f'\circ\alpha$;
	\item $\varphi$ keeps fixed all points of $E$ and those critical values of $f$ which are not in $\bigcup_eD_e$;
	\item $\varphi$ is isotopic to the identity mapping in the class of homeomorphisms satisfying (ii);
	\item $\varphi(D_e)=D'_e$ for each $e\in E$. 
\end{enumerate}
We denote the equivalence class of a decorated function $(f,E,\{D_e\}_{e\in E})$ by $[f,E,\{D_e\}_{e\in E}]$.

Let $N(\Sigma,n)$ be the set of equivalence classes of the decorated functions.

The topology in $N=N(\Sigma,n)$ is introduced in terms of neighbourhoods as follows.
Given $x\in N$ and a decorated function $(f,E,\{D_e\}_{e\in E})$ belonging to $x$,
let $\bar E$ be the union of $E$ with the set of critical values lying outside $\bigcup_{e\in E}D_e$.
For each $e'\in\bar E\setminus E$ we choose a closed disk $D_{e'}\subset S^2\setminus(\bigcup_{e\in E}D_e)$
containing $e'$ in its interior. We do it so that the disks $D_{e'}$ are pairwise disjoint.
Let $D=\{D_e\}_{e\in\bar E}$. Then the \emph{neighbourhood} $U_D$ \emph{of} $x\in N$
is defined as the set of all classes of decorated functions $(f',E',\{D'_{e'}\}_{e'\in E'})$ with
$\bigcup_{e'}D'_{e'}\subset\bigcup_{e\in E}D_e$ such that $f'=h\circ f$, where $h$
is a homeomorphism of the sphere fixed outside $\bigcup_{e\in\bar E}\mathrm{Int}D_e$.

In the following evident remark we use the notation of the last definition.
\begin{za}
	\label{VE}     
	Decorated functions $(f,E,\{D_e\}_{e\in E})$ and $(f',E,\{D_e\}_{e\in E})$
	are equivalent if and only if the coverings $f$ and $f'$ are $(V,\bar E)$-equivalent
	(see Section \ref{Deform}), where $V=\bigcup_{e\in\bar E}D_e$. \qed
\end{za}
Recall that $P^k$ is the $k$-th symmetric power of $S^2$.
Let $q:N(\Sigma,n)\rightarrow P^k$, $k=2n-\chi(\Sigma)$,
be such that
$q[f,E,\{D_e\}_{e\in E}]$ is the collection of critical values of $f$ lying outside
$\bigcup_{e\in E}D_e$ taken with multiplicity $1$ and points $e\in E$ taken with the multiplicities
defined above.
\begin{te}
	\label{NT}
	There exists a unique homeomorphism $\nu: \bar{X}_{\Sigma,n}\rightarrow N(\Sigma,n)$ fixed on $ H(\Sigma,n)$
	and such that $q\circ\nu=LL$.    
\end{te}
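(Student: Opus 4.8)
The plan is to build $\nu$ by hand at the level of coverings and then reduce every property to the dictionary already established between perturbations/degenerations and decorated functions. \emph{Construction of $\nu$.} Fix $[f']\in\bar X_{\Sigma,n}$ and choose a perturbation domain $V=\bigcup_{w\in\mathrm{cr}f'}\bar U_w$, one disk around each critical value of $f'$. By Proposition \ref{nspert} there is a class $[f]\in U_{[f'],V}\cap H(\Sigma,n)$, unique up to $(V,\mathrm{cr}f')$-equivalence; using the positional freedom in Proposition \ref{pert} I arrange that no critical value of $f$ lies on any $\partial\bar U_w$ or at any point of $\mathrm{cr}f'$. Let $E\subset\mathrm{cr}f'$ be the set of multiple critical values of $f'$ and set $D_w=\bar U_w$ for $w\in E$. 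Since each such $\bar U_w$ then contains at least two critical values of $f$ in its interior, $(f,E,\{D_w\}_{w\in E})$ is a decorated function, and I define $\nu[f']=[f,E,\{D_w\}_{w\in E}]$. For $[f']\in H(\Sigma,n)$ this gives $E=\varnothing$ and $\nu[f']=[f,\varnothing,\varnothing]$, the canonical image of $H(\Sigma,n)$ in $N(\Sigma,n)$, so $\nu$ is fixed on $H(\Sigma,n)$.

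\emph{Well-definedness, compatibility, and bijectivity.} Independence of the auxiliary choice of $f$ is exactly Remark \ref{VE} combined with the uniqueness clause of Proposition \ref{nspert}: two admissible choices are $(V,\mathrm{cr}f')$-equivalent, hence define equivalent decorated functions. Independence of the domain $V$ follows by passing to a common smaller domain and invoking transitivity of perturbation (Remark \ref{trans}). The multiplicity of $w\in E$ as a decorated point is by definition the number of critical values of $f$ in $D_w$, which equals the multiplicity of $w$ as a critical value of $f'$; hence $q\circ\nu=LL$. For surjectivity I start from an arbitrary decorated function $(f,E,\{D_e\})$ and apply Proposition \ref{degen} to the partition of $\mathrm{cr}f$ whose parts are the critical values contained in each $D_e$ together with the singletons of the remaining critical values; this produces a degeneration $f'$ with $\nu[f']=[f,E,\{D_e\}]$. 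For injectivity, if $\nu[f_1']=\nu[f_2']$ then $LL[f_1']=LL[f_2']$, and the common Hurwitz covering lies in $U_{[f_1'],V}\cap U_{[f_2'],V}$ for a common domain $V$, so the argument of the proof of Theorem \ref{T2} (equivalently Corollary \ref{W}) gives $[f_1']=[f_2']$.

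\emph{Bicontinuity and uniqueness.} The heart of the argument is that $\nu$ carries the basic neighbourhoods of $\bar X_{\Sigma,n}$ onto those of $N(\Sigma,n)$: for $V$ as above and $D$ the decoration data it determines (the disks $\bar U_w$ over \emph{all} $w\in\mathrm{cr}f'$, those over $E$ serving as the decorated disks), I claim $\nu(U_{[f'],V})=U_D$. The inclusion $\subseteq$ is obtained by taking a perturbation $g'$ of $f'$ with domain $V$, perturbing it further to a Hurwitz covering $g$ inside $V$, and using transitivity (Remark \ref{trans}) with Proposition \ref{nspert} to write $g=h\circ f$ with $h$ fixed outside $\mathrm{Int}\,V$; the reverse inclusion is produced by the degeneration construction of Proposition \ref{degen} applied inside the decorated disks. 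Granting this, $\nu$ is an open continuous bijection, i.e.\ a homeomorphism. Uniqueness is then immediate: $H(\Sigma,n)$ is dense in $\bar X_{\Sigma,n}$ by Proposition \ref{nspert}, $N(\Sigma,n)$ is Hausdorff, and a continuous map fixed on a dense subset is determined everywhere.

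The step I expect to be the main obstacle is the neighbourhood identification $\nu(U_{[f'],V})=U_D$: the two bases look superficially different (perturbations with a prescribed domain on one side, decorated functions refined inside prescribed disks on the other), and matching them requires careful bookkeeping of which critical values are decorated, of the disks around simple versus multiple values, and of the exact form of the homeomorphism $h$. All of this is controlled by transitivity of perturbation and by Remark \ref{VE}, but making it precise is the genuinely technical part of the proof.
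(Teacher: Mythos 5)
Your overall route is the paper's: construct $\nu$ via Propositions \ref{nspert} and \ref{pert}, get well-definedness from Remark \ref{VE}, invert via Proposition \ref{degen}, and match the two neighbourhood bases. But there is one concrete error in your normalization of the Hurwitz perturbation, and it breaks three of the things you need. You arrange ``that no critical value of $f$ lies on any $\partial\bar{U}_w$ or at any point of $\mathrm{cr}f'$''. This is the wrong choice over the \emph{simple} critical values of $f'$. Since $q$ records the actual positions of the critical values of $f$ lying outside the decorated disks (each with multiplicity $1$), your $q(\nu[f'])$ contains, for each simple critical value $w$ of $f'$, a point of $\mathrm{Int}\,\bar{U}_w$ distinct from $w$, whereas $LL[f']$ contains $w$ itself; so $q\circ\nu=LL$ fails. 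Worse, $\nu$ is not even well defined: condition (ii) in the equivalence of decorated functions forces $\varphi$ to fix the critical values outside $\bigcup_{e\in E}D_e$, so two admissible choices of $f$ whose displaced simple critical values sit at different points of the disks $\bar{U}_w$ give \emph{inequivalent} decorated functions; the uniqueness clause of Proposition \ref{nspert} cannot repair this, since $(V,\mathrm{cr}f')$-equivalence only fixes $\mathrm{cr}f'\cup(S^2\setminus\mathrm{Int}V)$ and allows critical values to move inside $V$. Finally, for $[f']\in H(\Sigma,n)$ your $\nu[f']=[f,\varnothing,\varnothing]$ has $\mathrm{cr}f\neq\mathrm{cr}f'$, and again by condition (ii) this class differs from $[f',\varnothing,\varnothing]$, so $\nu$ is not fixed on $H(\Sigma,n)$.

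The fix is the opposite normalization, which is exactly what the paper imposes: using the positional freedom of Proposition \ref{pert}, require $E\cap\mathrm{cr}\tilde{f}=\varnothing$ \emph{and} $\bar{E}\setminus E\subset\mathrm{cr}\tilde{f}$, i.e. displace nothing over the simple critical values --- since the multiplicities of the critical values of $\tilde{f}$ inside $\bar{U}_w$ sum to $1$ when $w$ is simple, the unique such critical value can and must be placed at $w$ itself. With this correction the rest of your argument goes through and essentially reproduces the paper's proof: your claimed identity $\nu(U_{[f'],V})=U_D$ is the conjunction of the paper's two inclusions $\nu(U_{[f],V})\subset U_D$ and $\nu^{-1}(U_D)\subset U_{[f],V}$ for matched pairs $(V,D)$; your injectivity argument via Theorem \ref{T2} and Corollary \ref{W} replaces the paper's explicit verification that the map built from Proposition \ref{degen} inverts $\nu$; and your uniqueness-by-density argument (legitimate, since $N(\Sigma,n)$ is Hausdorff once $\nu$ is known to be a homeomorphism onto the Hausdorff space $\bar{X}_{\Sigma,n}$ of Theorem \ref{T2}) is a clean substitute for the paper's terse remark that uniqueness follows from the construction. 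One point your base-matching sketch uses tacitly and should state: that the sets $U_D$ form a local base of the topology of $N(\Sigma,n)$, for which the paper invokes \cite[Theorem 1.5]{NT}.
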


\begin{proof}
	For $[f]\in \bar{X}_{\Sigma,n}$ we denote the set of all critical values of $f$ by $\bar E= \mathrm{cr} f$
	and we denote the set af multiple critical values of $f$ by $E$.
	Let $V=\bigcup_{e\in\bar E} D_e$ be a perturbation domain of $f$.
	By Proposition \ref{nspert}, there exists a perturbation
	$\tilde{f}$ of $f$ with $[\tilde{f}]\in U_{[f],V}\cap H(\Sigma,n)$. Moreover, by Proposition \ref{pert}
	we may assume that $E\cap\mathrm{cr}\tilde{f} = \varnothing$ and $\bar E\setminus E\subset\mathrm{cr}\tilde{f}$.
		Then we set $\nu[f]=[\tilde{f},E,\{D_e\}_{e\in E}]$.  
	
	Let us show that the definition of $\nu[f]$ does not depend on the choices we are done.
	Let $[f']=[f]$. Then we have $\mathrm{cr} f' = \bar E$. Let $V' = \bigcup_{e\in\bar E} D'_e$ be a domain of
	perturbation of $f'$ and
	let $\tilde f'$ be a perturbation of $f'$ such that $[\tilde{f}']\in U_{[f'],V'}\cap H(\Sigma,n)$.
	As above, we may assume that $E\cap\mathrm{cr}\tilde f' = \varnothing$ and $\bar E\setminus E\subset\mathrm{cr}\tilde f'$.
	There exists an isotopy $\{\varphi_t:S^2\to S^2\}_{t\in[0,1]}$
	fixed on $\bar E$ such that $\varphi_0=\text{id}_{S^2}$ and such that $\varphi_1(D'_e)=D_e$ for any
	$e\in\bar E$. Therefore
	$[\tilde{f}',E,\{D'_e\}_{e\in E}]=[\varphi_1\circ \tilde{f}',E,\{D_e\}_{e\in E}]$
	and $[\varphi_1\circ \tilde{f}']\in U_{f',V}\cap H(\Sigma,n) = U_{f,V}\cap H(\Sigma,n)$.
	By Proposition \ref{nspert} the coverings $\varphi_1\circ\tilde f'$ and $\tilde f$ are
	$(V,\tilde E)$-equivalent. Hence
	$[\varphi_1\circ \tilde{f}',E,\{D_e\}_{e\in E}] = [\tilde{f},E,\{D_e\}_{e\in E}]$
	according to Remark \ref{VE}. Thus $\nu$ is well defined.

	Let us define the inverse mapping.
	Let $(\tilde{f},E,\{D_e\}_{e\in E})$ be a decorated function
	and let $\bar{E}$ be the union of $E$ with the set of simple critical values of $\tilde{f}$
	which do not belong to $\bigcup_{e\in E}D_e$.
	In $S^2\setminus(\bigcup_{e\in E}D_e)$, we choose pairwise disjoint closed disk neighbourhoods
	$\{D_e\}_{e\in \bar{E}\setminus E}$ of points of $\bar{E}\setminus E$.
	Let $V=\bigcup_{e\in \bar{E}}D_e$. By Proposition \ref{degen}, there exists a degeneration
	$f$ of $\tilde{f}$ such that $[\tilde{f}]\in U_{[f],V}\cap H(\Sigma,n)$ and $ LL(f)=q(\tilde{f})$.
	It is easy to check that the mapping $\nu^{-1}:[\tilde{f},E,\{D_e\}_{e\in E}]\mapsto[f]$
	is well defined (here we apply the definition of isomorphism of branched coverings given in Section \ref{Deform})
	and that it is the inverse of $\nu$. Hence $\nu$ is bijective.
	
	Let us show that $\nu$ maps the local base  at any point $[f]$ to a local base 
	at $\nu[f]$. Let, as above, $\bar E$ (resp. $E$) be the set of all (resp. of multiple) critical values of $f$,
	let $V=\bigcup_{e\in\bar E} D_e$ be a perturbation domain for $f$, and let
	$\nu[f]=[\tilde{f},E,\{D_e\}_{e\in E}]$.
	Let $[f']$ be an arbitrary element of $U_{[f],V}$ and let
	$\nu[f']=[\tilde{f'},E',\{D'_{e'}\}_{e'\in E'}]$, where
	$V'=\bigcup_{e'\in\bar E'} D'_{e'}$ is
	a perturbation domain of $f'$ which is chosen so that $V'\subset V$. 
	Then we have $\bigcup_{e'\in E'}D'_{e'}\subset\bigcup_{e\in E}D_e$. 
	We are going to show that $\nu[f']$ belongs to the neighbourhood $U_D$ of $\nu[f]$, where
	$D=\{D_e\}_{e\in\bar E}$. To this end it is enough to check that
	$[\tilde{f'}]=[h\circ\tilde{f}]$, where
	$h:S^2\to S^2$ is a homeomorphism identical outside
	$\bigcup_{e\in\bar E}\mathrm{Int}D_e=\mathrm{Int}V$.
	By definition, $[\tilde{f}]\in U_{[f],V}\cap H(\Sigma,n)$. Since
	$V'\subset V$, we have $[\tilde{f'}]\in U_{[f'],V'}\cap H(\Sigma,n)\subset U_{[f],V}\cap H(\Sigma,n)$
	by the transitivity of perturbation, see Remark \ref{trans}.
	Hence, by Proposition \ref{nspert}, the coverings $\tilde{f}$ and $\tilde{f'}$ are $(V,\bar E)$-equivalent,
	thus there exist homeomorphisms $\alpha:\Sigma\rightarrow \Sigma$ and $h:S^2\rightarrow S^2$ such that
	$h\circ\tilde{f}=\tilde{f'}\circ\alpha$.
	By replacing $\tilde{f'}$ with the isomorphic covering $\tilde{f'}\circ\alpha$ we obtain
	$[\tilde{f'}]=[h\circ\tilde{f}]$. Therefore $\nu(U_{[f],V})\subset U_D$. 
	
	Conversely, let $U_D$ be a neighbourhood of $[\tilde{f},E,\{D_e\}_{e\in E}]$
	defined by a family of closed disks $D=\{D_e\}_{e\in\bar E}$ and let
	$[\tilde{f'},E',\{D'_{e'}\}_{e'\in E'}]\in U_D$. We set $V=\bigcup_{e\in\bar E}D_{e}$ and
	$V'=\bigcup_{e'\in E'}D'_{e'}\cup\bigcup_{e\in\bar E\setminus E}D_{e}$.
	Then $ V'\subset V $ by the definition of $ U_D$.
	Hence, for $[f]=\nu^{-1}[\tilde{f},E,\{D_e\}_{e\in E}]$ and $[f']=\nu^{-1}[\tilde{f'},E',\{D'_{e'}\}_{e'\in E'}]$,
	we obtain $[f']\in U_{[f],V}$ by the transitivity of perturbation (see Remark \ref{trans}).
	Thus $\nu^{-1}(U_D)\subset U_{[f],V}$.
	
	By Proposition \ref{base} combined with \cite[Theorem 1.5]{NT}, the families
	$\{U_{[f],V}\}_V$ and $\{U_D\}_D$ are local bases  at $[f]$ and $\nu[f]$ respectively.
	Hence $\nu$ is a homomorphism whose uniqueness follows from its construction.
	The equality $q\circ\nu=LL$ immediately follows from the definitions.
\end{proof}

If $S^2$ is endowed with the complex structure, then its symmetric power $P^k$ becomes a
projective space. 
Since $q|_{H(\Sigma,n)}$ is an unramified covering over its Zariski open subset,
$H(\Sigma,n)$ endowed with the complex structure lifted from $P^k$ is a smooth
quasiprojective variety.

\begin{za}
	\label{norm} 
	The space $H(\Sigma,n)$ is connected \emph{(see \cite{H}; note that when Hurwitz formulates in
		\cite{H} this result, he cites an earlier Clebsch' paper)}, hence $N(\Sigma,n)$ is connected as well.
	\emph{Diaz \cite[p. 2]{D} has shown that} $N(\Sigma,n)$ is homeomorphic to a normal projective variety.
	%where $H(\Sigma,n)$ sits as a Zariski open subset. 
	Since this variety is normal, its singularities are of complex codimension at least two.
\end{za} 
The authors are grateful to the referee for indicating some gaps and mistakes in the first version of the paper.

%\section{Лит-ра}
%%%%%%%%%%%%%%%%%%%%%%%%%%%%%%%%%%%%%%%%%%%%%%%%%%%%%%%%%%%%%%%%%%%%%


\begin{thebibliography}{100}
	
	\bibitem{H} \textit{Hurwitz A.} Ueber Riemann'sche Fl\"{a}chen mit gegebenen Verzweigungspunkten //
	Math. Ann. 1891. V.  39. P. 1--61.

	\bibitem{HM} \textit{Harris J., Mumford D.} On the Kodaira Dimension of the Moduli Space of Curves //
	Invent. Math. 1982. V. 67, N. 1. P. 23–-88.
	
	\bibitem{NT} \textit{Natanzon S., Turaev V.} A compactification of the Hurwitz space //
	Topology. 1999. V. 38, N. 4. P. 889-914.
	
	\bibitem{D} \textit{Diaz S.} On the Natanzon-Turaev compactification of the Hurwitz space //
	Proc. Amer. Math. Soc. 2002. V. 130, N. 3. P. 613--618.
	
	\bibitem{DE} \textit{Diaz S., Edidin D.} Towards the homology of Hurwitz spaces //
	J. Differential Geom. 1996. V. 43, N. 1. P. 66–-98.
	
	\bibitem{Or} \textit{Orevkov S.~Yu.} Riemann existence theorem and construction of real algebraic curves //
	Annales de la Facult\'e des Sciences
	de Toulouse. 2003. V. 12, N. 4. P. 517--531.
	
	\bibitem{Degt} \textit{Degtyarev A.} Topology of algebraic curves.
	An approach via dessins d'enfants.
	De Gruyter Studies in Mathematics, 44.  Berlin: Walter de Gruyter \& Co., 2012.
	
	\bibitem{Nat} \textit{Natanzon S.~M.}
	Topology of 2-dimensional coverings and meromorphic
	functions on real and complex algebraic curves //
	Trudy Sem. Vektor. Tenzor. Anal., 1988, No. 23, P. 79--103 (in Russian);
	English transl.:
	Selecta. Math. Soviet., 1993,  V.12, No. 3, P. 251--291.
	
	\bibitem{BE} \textit{Berstein I., Edmonds A.L.} On the classification of generic branched coverings of surfaces //
	Illinois J. Math. 1984. V. 28, N. 1. P. 64--82.
	
	\bibitem{BK} \textit{Bogomolov F.~A., Kulikov Vik.~S.} The ambiguity index of an equipped finite group //
	Eur. J. Math. 2015. V. 1, N. 2. P. 260--278.
	
	\bibitem{FB}  \textit{Fried M.,  Biggers R.} Moduli spaces of covers and the Hurwitz monodromy group // J. Reine
	Angew Math. 1982. V. 335. P. 87–-121.
	
	\bibitem{FV}\textit{ Fried M.~D.,  V\"{o}lklein H.} The inverse Galois problem and rational points on moduli spaces //
	Math. Ann. 1991. V. 290, N. 1. P. 771–-800.
	
	\bibitem{KKh}  \textit{Kulikov Vik.~S.,  Kharlamov V.M.} Covering semigroups // Izv. RAN, ser. mat,
	2013. V. 77, no. 3. P. 163–198 (in Russian), English transl.: Izv. Math. 2013. V. 77, N. 3. P. 594-–626.
	
	\bibitem{K}  \textit{Kulikov Vik.~S.} Factorizations in finite groups 
	// Matem. sbornik. 2013. V. 204, no. 2. P. 87-–116 (in Russian); English transl.:
	Sb. Math. 2013. V. 204, N. 2. P. 237–-263.	
	
	\bibitem{VP} \textit{Popov V.~L., Vinberg E.~B.} Invariant theory //
	% Itogi nauki i tehn. VINITI. Sovr. probl. matem. Fund. napravl.
	% Moscow: 1989, V. 55, P. 137--309 (in Russian); English transl.:
	in: Encyclopaedia of Mathematical Sciences 55.
	Linear Algebraic Groups Invariant Theory. Algebraic Geometry IV. Berlin: Springer, 1994, P. 123--279.
	
	\bibitem{L} \textit{Lando S.~K.} Ramified coverings of the two-dimensional sphere and intersection
	theory in spaces of meromorphic functions on algebraic curves // Usp. Mat. Nauk, 2002. V. 57, No. 3(345), P. 29--98
	(in Russian); English transl.:
	Russ. Math. Surv. 2002, V. 57, No. 3, P. 463--533.
	
	\bibitem{Cool} \textit{Coolidge J.~L.} A treatise of algebraic plane curves. Oxford Univ. Press, 1931. 
	
	\bibitem{LZ} \textit{Lando S.~K., Zvonkin A.~K.}
	Graphs on surfaces and their applications.
	Encyclopaedia of Mathematical Sciences 141. Low-Dimensional Topology 2. Berlin: Springer, 2004.
	
	
\end{thebibliography}
\end{document}